\def\cl{\centerline}
\def\vs{\vspace*}
\def\H{\mathcal{H}}
\def\Z{\mathbb{Z}}
\def\N{\mathbb{N}}
\def\C{\mathbb{C}}
\numberwithin{equation}{section}
\newtheorem{theo}{Theorem}[section]
\newtheorem{defi}[theo]{Definition}
\newtheorem{lemm}[theo]{Lemma}
\newtheorem{exam}[theo]{Example}
\newtheorem{prop}[theo]{Proposition}
\newtheorem{clai}{Claim}
\newtheorem{case}{Case}
\newtheorem{remark}[theo]{Remark}
\begin{document}
\begin{center}
{\large\bf  Two classes of non-weight  modules  over the\\ twisted Heisenberg-Virasoro algebra}

\end{center}
\cl{Haibo Chen, Jianzhi Han\footnote{Correspondence: J.~Han (jzhan@tongji.edu.cn)}, Yucai Su, Xiaoqing Yue}

\cl{\small  School of Mathematical Sciences, Tongji University, Shanghai
200092, China}
\cl{\small E-mails:
 rebel1025@126.com, jzhan@tongji.edu.cn, ycsu@tongji.edu.cn, xiaoqingyue@tongji.edu.cn}

\vs{8pt}
{\small
\parskip .005 truein
\baselineskip 3pt \lineskip 3pt

\noindent{{\bf Abstract:} In the present paper,
 we construct  two  classes of  non-weight modules  $\Omega(\lambda,\alpha,\beta)\otimes\mathrm{Ind}(M)$ and $\mathcal{M}\big(V,\Omega(\lambda,\alpha,\beta)\big)$ over
the twisted Heisenberg-Virasoro algebra, which are both associated with the modules $\Omega(\lambda,\alpha,\beta)$.
 We present the necessary and sufficient conditions under which   modules in these two classes  are irreducible and isomorphic, and also show that the irreducible modules in these two classes are new.
Finally, we construct
 non-weight modules  $\mathrm{Ind}_{\underline y,\lambda}(\C_{RS})$ and $\mathrm{Ind}_{\underline z,\lambda}(\C_{PQ})$  over the twisted Heisenberg-Virasoro  algebra and then apply the established results to give irreducible
conditions for $\mathrm{Ind}_{\underline y,\lambda}(\C_{RS})$ and $\mathrm{Ind}_{\underline z,\lambda}(\C_{PQ})$.
\vs{5pt}

\noindent{\bf Key words:}
twisted Heisenberg-Virasoro
 algebra,  non-weight  module,    irreducible module.}

\noindent{\it Mathematics Subject Classification (2010):} 17B10, 17B65, 17B68.}
\parskip .001 truein\baselineskip 6pt \lineskip 6pt
\section{Introduction}
The well-known {\it twisted Heisenberg-Virasoro algebra}  $\H$, initially studied by Arbarello et al. in \cite{ADK}, is the universal central
extension of the Lie algebra $\overline L$ of differential operators on a circle of order at most one:
$$\overline L:=\left\{f(t)\frac{d}{dt}+g(t)\,\Big|\, f(t),g(t)\in\C[t,t^{-1}]\right\}.$$
To be more precise,
$\H$    is
 an infinite dimensional complex Lie algebra
with  basis  $\{L_m,I_m,C_i
\mid  m\in \Z,\,i=1,2,3\}$ subject to  the following Lie brackets:
\begin{equation*}
\aligned
&[L_m,L_n]= (n-m)L_{m+n}+\delta_{m+n,0}\frac{m^{3}-m}{12}C_1,\\&
 [L_m,I_n]=n I_{m+n}+\delta_{m+n,0}(m^{2}+m)C_2,\\&
[I_m,I_n]=n\delta_{m+n,0}C_3,
\\&
 [\H,C_1]=[\H,C_2]= [\H,C_3]=0.
\endaligned
\end{equation*}
Clearly, the subspaces spanned by  $\{I_m, C_3 \mid  0\neq m\in\Z\}$   and
  by  $\{L_m, C_1 \mid m\in\Z\}$ are respectively the  Heisenberg algebra and the Virasoro algebra.
  Notice that  the center of $\H$ is spanned by  $\{C_0:=I_0,C_i\mid i=1,2,3\}$.
Moreover, the twisted Heisenberg-Virasoro algebra has a triangular decomposition:
$$\H=\H_-\oplus\mathfrak{h}\oplus\H_+,$$
where $\mathfrak{h}=\mathrm{span}_{\C} \{L_0, C_i\mid i=0,1,2,3\}$
and
$$\H_-=\mathrm{span}_{\C}\{L_{-m},I_{-m}\mid m\in\N\},\
\H_+=\mathrm{span}_{\C}\{L_{m},I_{m}\mid m\in\N\}.
$$
The twisted Heisenberg-Virasoro algebra  is one of the most important Lie algebras both in mathematics and in mathematical physics, whose structure theory has extensively studied (see, e.g.,  \cite{CL,SJ,LPZ}).

A fundamental problem in the representation theory of  the  twisted Heisenberg-Virasoro algebra is to classify all its irreducible modules.  In fact, the  theory of  weight modules with all weight subspaces being finite dimensional (namely, Harish-Chandra-modules)
is well-developed. Irreducible weight modules over $\H$  with a nontrivial
finite dimensional weight subspace were proved to be  Harish-Chandra modules \cite{SS07}.
And irreducible Harish-Chandra $\H$-modules were classified in \cite{LZ1}, each of which was shown to be either  the highest (or lowest) weight module, or the  module of intermediate series, consistent with the well-known result for Virasoro algebra \cite{M}. While  weight modules with an infinite dimensional weight subspace were also studied (see \cite{CHS16,R}).

Non-weight modules constitute  the other important ingredients of the representation theory of $\H$, the study of which is definitely necessary and became popular in the last few years.
A large class of new non-weight irreducible $\H$-modules
 were constructed in  \cite{CG},  which includes  the highest weight modules and Whittaker modules.
 Non-weight $\H$-modules whose restriction
to the universal enveloping algebra of the degree-$0$ part (modulo center) are free of
rank $1$
were studied in  \cite{CG2} (see also \cite{HCS}).
While  by twisting the weight modules, the authors \cite{CHS16} obtained
a family of new non-weight  irreducible $\H$-modules.  However, the theory of  non-weight  $\H$-modules is far more from being well-developed.

As a continuation of \cite{CHS16}, we still study the representation theory of $\H$ in this paper.  But we shall be concerned with non-weight  $\H$-modules. To be more precisely, we  construct two classes of  new irreducible  non-weight $\H$-modules in the present paper, which are both related to the modules  $\Omega(\lambda,\alpha,\beta)$  (see \cite{CG2}). It is well known that an important way of constructing modules is to consider the linear tensor product of two modules, see for instance, \cite{CGZ,TZ,TZ1,Z} for such modules over the Virasoro algebra.  One class of non-weight modules is constructed by considering the tensor product module  and the other class  also stems from the linear tensor product of two modules  but on which different actions are given.

We briefly give a summary of the paper below.
 In Section $2$, we recall some known modules and  construct  a class of non-weight   modules $\mathcal{M}\big(V,\Omega(\lambda,\alpha,\beta)\big)$ over the twisted Heisenberg-Virasoro algebra.    Section 3 is devoted to    studying the
irreducibilities of tensor product modules  $\Omega(\lambda,\alpha,\beta)\otimes\mathrm{Ind}(M)$ and the reducibilities of  $\H$-modules $\mathcal{M}\big(V,\Omega(\lambda,\alpha,\beta)\big)$. We prove that $\mathcal{M}\big(V,\Omega(\lambda,\alpha,\beta)\big)$  is reducible if and  only if $V$  is a certain one-dimensional $\bar\H_{r,d}$-module.
In Section 4, we   give the necessary and sufficient conditions for two   irreducible  $\H$-modules $\Omega(\lambda_1,\alpha_1,\beta_1)\otimes \mathrm{Ind}(M_1)$
 and  $\Omega(\lambda_2,\alpha_2,\beta_2)\otimes \mathrm{Ind}(M_2)$   to be isomorphic. Moreover, we  determine the isomorphism classes  of the other class
 of non-weight  $\H$-modules $\mathcal{M}\big(V,\Omega(\lambda,\alpha,\beta)\big)$. In section $5$, we give some practical examples of  irreducible modules of  $\Omega(\lambda,\alpha,\beta)\otimes\mathrm{Ind}(M)$, and present the irreducibilities of  modules $\mathrm{Ind}_{\underline y,\lambda}(\C_{RS})$ and $\mathrm{Ind}_{\underline z,\lambda}(\C_{PQ})$.  At last,  we show that irreducible $\H$-modules in these two classes   are new by showing they are not isomorphic to each other or the known ones.
The main results of this paper are summarized in Theorems \ref{th1}, \ref{th3.3},   \ref{th2}, \ref{4.4},  \ref{th3} and   \ref{th4}.

Throughout this paper, we respectively denote by $\C,\C^*,\Z,\Z_+$  and $\N$   the sets of complex numbers, nonzero complex numbers, integers, nonnegative integers and positive integers,  and use $\mathcal{U}(\mathfrak{a})$ to denote the universal enveloping algebra of $\mathfrak{a}$.
 All vector spaces are assumed to be over $\C$.

\section{Some non-weight modules}\label{sec2}

\subsection{Known modules  $\Omega(\lambda,\alpha,\beta)$ and $\mathrm{Ind}(M)$}

For $\lambda\in\C^*$, $\alpha,\beta\in\C$,  we recall the  non-weight module
\begin{equation}\label{Om1}\Omega(\lambda,\alpha,\beta):=\C[t],\end{equation}
with the action of  $\H$ defined, for $i=1,2,3$, $f(t)\in\C[t]$ and $m\in\Z$, by
$$L_m\cdot f(t)=\lambda^m(t-m\alpha)f(t-m),\ I_m\cdot f(t)=\lambda^m\beta f(t-m),\ C_i\cdot f(t)=0.$$
Then $\Omega(\lambda,\alpha,\beta)$ is irreducible
 if and only if $\alpha\in\C^*$ or $\beta\in\C^*$ (see \cite{CG2}). Notice that   this module reduces to  a Virasoro module if  $\beta=0$ (see \cite{LZK}).

Now let us recall a large class of irreducible modules for the twisted Heisenberg-Virasoro algebra,
which includes  the known irreducible modules such as   highest weight modules and Whittaker modules.
For any $e\in\Z_+$, denote by $\H_e$ the subalgebra
$$\mbox{$\sum\limits_{m\in\Z_+}$}(\C L_{m}\oplus\C  I_{m-e})\oplus\C C_1\oplus\C C_2\oplus\C C_3.$$
Take $M(c_0,c_1,c_2,c_3)$ to be an irreducible $\H_e$-module such that  $I_0,C_1,C_2$ and $C_3$ act on it as scalars $c_0,c_1,c_2,c_3$ respectively. For convenience, we briefly denote $M(c_0,c_1,c_2,c_3)$ by $M$ and form the induced $\H$-module
\begin{equation}\label{ind2.2}
\mathrm{Ind}(M):=\mathcal{U}(\H)\otimes_{\mathcal{U}(\H_{e})}M.
\end{equation}
The following theorem is obtained in \cite{CG}.
\begin{theo}\label{th2.1}
Let $e\in\Z_+$ and $M$ be an irreducible $\H_e$-module with $c_3=0$.
Assume there exists $k\in\Z_+$ such that
\begin{itemize}\lineskip0pt\parskip-1pt\item[{\rm (1)}]
$
\left\{\begin{array}{llll}\mbox{the\ action\ of}\   I_{k}    \ \mbox{on}\   M \ \mbox{is\ injective}\ \quad  &\mbox{if \ }k\neq 0,\\[4pt]
c_0+(n-1)c_2\neq0\quad \mathrm{for\ all}\ n\in\Z\setminus\{0\} &\mbox{if \ }k=0,
\end{array}\right.
$
\item[{\rm (2)}]   $I_nM=L_mM=0$ for  all $n>k$ and $m>k+e$.
\end{itemize}
Then
\begin{itemize}\lineskip0pt\parskip-1pt
\item[{\rm (i)}]
$\mathrm{Ind}(M)$ is an irreducible  $\H$-module$;$
\item[{\rm (ii)}]
the actions of $I_n,L_m$ on $\mathrm{Ind}(M)$ for all $n>k$ and $m>k+e$ are  locally nilpotent.
\end{itemize}
\end{theo}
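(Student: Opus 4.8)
The plan is to realize $\mathrm{Ind}(M)$ concretely via the PBW theorem and then run a degree-reduction argument. Decompose $\H=\mathfrak n\oplus\H_e$ as vector spaces, where $\mathfrak n:=\mathrm{span}_{\C}\{L_{-m},\,I_{-e-m}\mid m\in\N\}$; the PBW theorem identifies $\mathrm{Ind}(M)$ with $\mathcal U(\mathfrak n)\otimes_{\C}M$ as a vector space, a basis being the ordered monomials
\[
X=L_{-m_1}\cdots L_{-m_p}\,I_{-e-n_1}\cdots I_{-e-n_q}\qquad(1\le m_1\le\cdots\le m_p,\ 1\le n_1\le\cdots\le n_q,\ p,q\ge 0),
\]
tensored with a basis of $M$. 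To such an $X$ I attach the degree $\deg X=\sum_s m_s+\sum_t n_t\in\Z_+$ and refine $\deg$ (by length, then lexicographically on the exponent sequence) to a total order $\prec$ on monomials; since each fixed degree contains only finitely many monomials, $\prec$ is a well-order. For a nonzero $v=\sum_i X_i\otimes w_i$ with the $X_i$ distinct and $0\ne w_i\in M$, write $\mathrm{lt}(v)$ for the $\prec$-largest $X_i$ occurring.

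The key mechanism, which already yields (ii), is that the operators named in (2) strictly lower the degree. Fix $n>k$; by (2) we have $I_nM=0$, so on a basis vector $I_n(X\otimes w)=[I_n,X]\otimes w$. Expanding $[I_n,X]$ through the relations, each resulting term either loses a factor to a central element ($-\delta_{n,m_s}(m_s^2-m_s)C_1$-type terms, and nothing from the $I$--$I$ brackets since $c_3=0$) or replaces a factor $L_{-m_s}$ by $I_{n-m_s}$, which after being moved to the right is annihilated on $M$ (if $n-m_s>k$), acts on $M$ (if $-e\le n-m_s\le k$), or becomes a new $\mathfrak n$-factor $I_{-e-(m_s-n-e)}$ (if $n-m_s<-e$); a short index check (using $n\ge1$, $e\ge0$) shows that in every case the monomials produced have degree $\le\deg X-1$. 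Hence $I_n$ sends the degree-$\le d$ subspace into the degree-$\le d-1$ subspace and is therefore locally nilpotent; the identical computation with $[L_m,L_{-m_s}]=-(m+m_s)L_{m-m_s}+\delta_{m,m_s}\tfrac{m^3-m}{12}C_1$, $[L_m,I_{-e-n_t}]=-(e+n_t)I_{m-e-n_t}+\delta_{m,e+n_t}(m^2+m)C_2$ and $L_mM=0$ handles $L_m$ for $m>k+e$. This proves (ii).

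For (i), let $W$ be a nonzero submodule of $\mathrm{Ind}(M)$. It suffices to show $W\cap(1\otimes M)\ne0$: indeed $1\otimes M$ is an $\H_e$-submodule of $\mathrm{Ind}(M)$ isomorphic to the irreducible module $M$, so $W\cap(1\otimes M)\ne0$ forces $1\otimes M\subseteq W$, whence $W\supseteq\mathcal U(\H)(1\otimes M)=\mathrm{Ind}(M)$ and $W=\mathrm{Ind}(M)$. To produce a nonzero vector of $W\cap(1\otimes M)$, choose $0\ne v\in W$ with $\mathrm{lt}(v)$ minimal for $\prec$; if $\deg\mathrm{lt}(v)=0$ then $v\in1\otimes M$ and we are done, so suppose $\deg\mathrm{lt}(v)=d\ge1$ and seek a contradiction. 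Apply to $v$ the operator $Y:=I_{k+m_p}$ when $\mathrm{lt}(v)$ ends in an $L$-factor $L_{-m_p}$, and $Y:=L_{k+e+n_q}$ when it ends in an $I$-factor $I_{-e-n_q}$ (note $k+m_p>k$ and $k+e+n_q>k+e$, so $Y$ annihilates $M$ by (2)). By the mechanism of the previous paragraph $\deg\mathrm{lt}(Yv)\le d-1$. The crucial point is that $Yv\ne0$: the relevant extremal commutator creates the mode $I_k$, whose contribution to $Yv$ has $M$-component a fixed nonzero scalar times $I_kw_0$ when $k\ne0$ — nonzero because $I_k$ is injective on $M$ — and a nonzero scalar times $\big(c_0+(\ell-1)c_2\big)w_0$ for a suitable $\ell\in\Z\setminus\{0\}$ when $k=0$ — nonzero by the hypothesis $c_0+(n-1)c_2\ne0$ for all $n\ne0$ — and one checks that with the order $\prec$ chosen above this term is not cancelled by the contributions of the other $X_i$. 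Then $\mathrm{lt}(Yv)\prec\mathrm{lt}(v)$, contradicting minimality. Iterating, $v$ reduces to a nonzero element of $W\cap(1\otimes M)$, proving (i).

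The main obstacle is this last nonvanishing claim. One must fix the refined order $\prec$ and the choice of $Y$ carefully enough — in practice $Y$ should be built from the \emph{largest} exponent occurring in $v$ rather than merely the last factor, and one may need to apply a short product of such operators — so that, after expansion, a single surviving monomial of degree $d-1$ carries a coefficient in $M$ that is provably nonzero, with no cancellation against the lower $X_i$'s; tracking the interaction with the central terms $C_1,C_2$ (and using $c_3=0$ to suppress the Heisenberg brackets among the created modes) in this expansion is the technical heart of the argument, whereas the descent to $\deg=0$ and the passage from $1\otimes M$ to all of $\mathrm{Ind}(M)$ are formal.
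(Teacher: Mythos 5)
You should first note that the paper does not prove this statement at all: Theorem \ref{th2.1} is quoted verbatim from \cite{CG} (``The following theorem is obtained in \cite{CG}''), so there is no internal proof to compare with, and your attempt has to be judged against the argument in that reference, whose general strategy you do follow: identify $\mathrm{Ind}(M)$ with $\mathcal{U}(\mathfrak n)\otimes M$ via PBW for $\mathfrak n=\mathrm{span}_\C\{L_{-m},I_{-e-m}\mid m\in\N\}$, put a total order on PBW monomials refining a degree, observe that $I_n$ ($n>k$) and $L_m$ ($m>k+e$) strictly lower the degree, and run a descent to $1\otimes M$. Your argument for part (ii) along these lines is essentially complete and correct.

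For part (i), however, there is a genuine gap at exactly the point you flag yourself. The whole irreducibility proof rests on the claim that $Yv\neq 0$ and $\mathrm{lt}(Yv)\prec\mathrm{lt}(v)$, i.e.\ that the designated degree-$(d-1)$ term whose $M$-component is a nonzero multiple of $I_k w_0$ (resp.\ of $\bigl(c_0+(\ell-1)c_2\bigr)w_0$ when $k=0$) survives uncancelled. You write ``one checks that \dots this term is not cancelled,'' but you never fix the refinement $\prec$ or the operator $Y$ precisely enough for this to be checkable, and in the final paragraph you concede that $Y$ ``should be built from the largest exponent occurring in $v$ rather than merely the last factor'' and that ``one may need to apply a short product of such operators.'' As written, the construction can fail: distinct monomials $X_i$ of the same top degree $d$ can, after commuting $Y$ through, produce the \emph{same} PBW monomial of degree $d-1$, with $M$-coefficients that are linear combinations of various $I_jw_i$, and nothing in your setup excludes cancellation; nor, in the case $k=0$, do you identify the integer $\ell$ for which the scalar $c_0+(\ell-1)c_2$ actually arises. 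Designing the order on exponent tuples and proving that a single extremal term survives (together with handling the central corrections from $C_1,C_2$) is precisely the case analysis that constitutes the proof in \cite{CG}; omitting it leaves the key step unproved, while the remaining ingredients of your outline (the well-ordering descent, $W\cap(1\otimes M)\neq0\Rightarrow W=\mathrm{Ind}(M)$ via irreducibility of $M$) are correct but routine.
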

\subsection{Construction of    $\mathcal{M}\big(V,\Omega(\lambda,\alpha,\beta)\big)$}

 For $d\in\{0,1\},r\in\Z_+$, 
  denote by $\H_{r,d}$
the Lie subalgebra of $\H_{+,d}=\mathrm{span}_{\C}\{L_i,I_j\mid i\geq0,j\geq d\}$  generated by  $L_i,I_j$ for all $i>r,j>r+d$.
 Now we write    $\bar \H_{r,d}$
 the quotient algebra $\H_{+,d}/ \H_{r,d}$,
 and   $\bar L_i,\bar I_{i+d}$ the respective images of $L_i,I_{i+d}$
in $\bar \H_{r,d}$. Let $V$ be an $\bar \H_{r,d}$-module.
For any $\lambda, \alpha,\beta\in\C$, define an $\H$-action  on the vector space $\mathcal{M}\big(V,\Omega(\lambda,\alpha,\beta)\big):=V\otimes \C [t]$ as follows\vspace{-0.3cm}
 \begin{eqnarray}
 &&\label{Lm2.1}  L_m\big(v\otimes f(t)\big)=v\otimes\lambda^m(t-m\alpha)f(t-m)+\sum_{i=0}^r\Big(\frac{m^{i+1}}{(i+1)!}\bar L_i\Big)v\otimes \lambda^mf(t-m),\\&&
\label{Im2.2} I_m\big(v\otimes f(t)\big)=\sum_{i=0}^r\Big(\frac{m^{i+d}}{(i+d)!}\bar I_{i+d}\Big)v\otimes  \lambda^m\beta f(t-m),\\&&
\label{C1232.3}  C_i\big(v\otimes f(t)\big)=0\quad {\rm for}\ i\in\{1,2,3\}, m\in\Z,v\in V, f(t)\in\C[t].
 \end{eqnarray}

\begin{prop}
Let $d\in\{0,1\},r\in\Z_+$ and $V$ be    an $\bar \H_{r,d}$-module. Then
 $\mathcal{M}\big(V,\Omega(\lambda,\alpha,\beta)\big)$
is a non-weight $\H$-module under the actions given in \eqref{Lm2.1}-\eqref{C1232.3}.
\end{prop}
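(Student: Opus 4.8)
The plan is to verify directly that the operators in \eqref{Lm2.1}--\eqref{C1232.3} satisfy the defining brackets of $\H$ on the space $V\otimes\C[t]$. Since every $C_i$ acts as the zero operator it is automatically central, and each $\delta$-term occurring in the bracket relations of $\H$ --- being a scalar multiple of some $C_i$ --- drops out; so it suffices to check, as operators on $V\otimes\C[t]$,
$$[L_m,L_n]=(n-m)L_{m+n},\qquad [L_m,I_n]=nI_{m+n},\qquad [I_m,I_n]=0\qquad(m,n\in\Z).$$
The module is non-weight because, by \eqref{Lm2.1}, $L_0$ acts as $\mathrm{id}_V$ tensored with multiplication by $t$ on $\C[t]$, which has no eigenvectors.

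The key is to put the action into tensor-factorized form. On $\C[t]$ define $S_mf(t)=\la^m(t-m\a)f(t-m)$ (the $\Omega(\la,\a,\b)$-action of $L_m$) and $T_mf(t)=\la^mf(t-m)$, and in $\bar\H_{r,d}$ set
$$P_m=\sum_{i=0}^{r}\frac{m^{i+1}}{(i+1)!}\bar L_i,\qquad Q_m=\sum_{i=0}^{r}\frac{m^{i+d}}{(i+d)!}\bar I_{i+d}.$$
Then \eqref{Lm2.1}--\eqref{Im2.2} read $L_m=\mathrm{id}_V\otimes S_m+P_m\otimes T_m$ and $I_m=\b\,(Q_m\otimes T_m)$, where the left tensor factors are the operators on $V$ given by the $\bar\H_{r,d}$-module structure. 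First I would record the elementary facts $T_mT_n=T_{m+n}$ (so the $T_m$ mutually commute), $[S_m,S_n]=(n-m)S_{m+n}$ (which is exactly the fact, proved in \cite{CG2}, that $\Omega(\la,\a,\b)$ is an $\H$-module, together with $C_1$ acting as $0$), and the one-line computation $[S_m,T_n]=nT_{m+n}$. Using that $\mathrm{id}_V$ is central in $\mathrm{End}(V)$ and that the $T_m$ mutually commute, expanding the three brackets above and simplifying reduces each of them to the corresponding identity in $\bar\H_{r,d}$, applied to $V$:
$$nP_n-mP_m+[P_m,P_n]=(n-m)P_{m+n},\qquad nQ_n+[P_m,Q_n]=nQ_{m+n},\qquad [Q_m,Q_n]=0.$$

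It then remains to prove these three identities in $\bar\H_{r,d}$. One should first note that $\H_{r,d}$ is indeed an ideal of $\H_{+,d}$ --- immediate from $[L_i,L_j]=(j-i)L_{i+j}$, $[L_i,I_j]=jI_{i+j}$, $[I_i,I_j]=0$ on $\H_{+,d}$ --- so that $\bar\H_{r,d}$ has basis $\{\bar L_0,\dots,\bar L_r\}\cup\{\bar I_d,\dots,\bar I_{r+d}\}$ with $\bar L_k=0$ for $k>r$, $\bar I_k=0$ for $k>r+d$, and structure constants $[\bar L_i,\bar L_j]=(j-i)\bar L_{i+j}$, $[\bar L_i,\bar I_j]=j\bar I_{i+j}$, $[\bar I_i,\bar I_j]=0$. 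The third identity is then immediate. For the first two, I would expand both sides in this basis and compare, for each $0\le k\le r$, the coefficient of $\bar L_k$ (resp.\ of $\bar I_{k+d}$); each comparison collapses to a scalar polynomial identity in $m,n$ --- for the $L$-$L$ relation it is
$$\frac{n^{k+2}-m^{k+2}}{(k+1)!}+\sum_{i+j=k}\frac{(j-i)\,m^{i+1}n^{j+1}}{(i+1)!\,(j+1)!}=\frac{(n-m)(m+n)^{k+1}}{(k+1)!},$$
and a similar binomial identity for the $L$-$I$ relation --- each of which is elementary to confirm.

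The step requiring care is the truncation bookkeeping: $[P_m,P_n]$ and $[P_m,Q_n]$ a priori produce basis vectors $\bar L_{i+j}$, $\bar I_{i+j+d}$ with indices up to $2r$, respectively $2r+d$, and one must check that all those beyond the cutoff vanish in $\bar\H_{r,d}$, so that the polynomial identities above are needed only in the range $0\le k\le r$, where they hold identically. In the $L$-$I$ case this is also where the hypothesis $d\in\{0,1\}$ is used: it is precisely what makes the two sides of the relevant binomial identity have matching ranges of monomials (for $d=0$ one additionally invokes the convention $1/(-n)!=0$, $n\in\N$). Granting these identities, the three bracket relations hold as operators on $V\otimes\C[t]$, and the proof is complete.
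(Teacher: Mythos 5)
Your proposal is correct and follows essentially the same route as the paper: both proofs verify the bracket relations directly on $V\otimes\C[t]$, and your reduction of $[L_m,I_n]=nI_{m+n}$ to the truncated binomial identity for $nQ_n+[P_m,Q_n]=nQ_{m+n}$ in $\bar\H_{r,d}$ is precisely the computation carried out in the paper's proof (including the $\delta_{d,0}$ bookkeeping). The only differences are that you handle the Virasoro relation $[L_m,L_n]=(n-m)L_{m+n}$ self-containedly via the analogous identity for $P_m$ instead of citing \cite[Section 3]{LZ} as the paper does, and you explicitly justify the non-weight claim from $L_0=\mathrm{id}_V\otimes t\cdot$, which the paper leaves implicit.
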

\begin{proof} Define a series of operators $x_m$ on  $\C[t]$ as follows:  $$x_mf(t)=\lambda^mf(t-m) \quad{\rm for}\  m\in\Z\ {\rm and}\  f(t)\in\C[t].$$ Then $I_nx_mf(t)=x_mI_nf(t)=I_{m+n}f(t)$ for any $m,n\in\Z$.
It follows from    \cite[Section 3]{LZ} that the relation $L_mL_n-L_nL_m=(n-m)L_{m+n}$ holds on
 $\mathcal{M}\big(V,\Omega(\lambda,\alpha,\beta)\big)$.
By \eqref{Lm2.1}-\eqref{C1232.3}, we have
\begin{eqnarray*}
&&(L_mI_n-I_nL_m)\big(v\otimes f(t)\big)\\
&=&\sum_{i=0}^r\Big(\frac{n^{i+d}}{(i+d)!}\bar I_{i+d}\Big)v\otimes  \big(L_mI_nf(t)\big)\\&&
+ \sum_{i=0}^r\Big(\frac{m^{i+1}}{(i+1)!}\bar L_i \Big)\sum_{i=0}^r\Big(\frac{n^{i+d}}{(i+d)!}\bar I_{i+d}\Big)v\otimes  \big(x_mI_nf(t)\big)
\end{eqnarray*}
\begin{eqnarray*}
&&-\sum_{i=0}^r\Big(\frac{n^{i+d}}{(i+d)!}\bar I_{i+d}\Big)v\otimes\big(I_nL_mf(t)\big)\\
&&-\sum_{i=0}^r\Big(\frac{n^{i+d}}{(i+d)!}\bar I_{i+d}\Big)\sum_{i=0}^r\Big(\frac{m^{i+1}}{(i+1)!}\bar L_i\Big)v\otimes  \big(I_nx_mf(t)\big)\\
&=&
n\sum_{i=0}^r\Big(\frac{n^{i+d}}{(i+d)!}\bar I_{i+d}\Big)v\otimes \big(I_{m+n}f(t)\big)\\&&
 +\sum_{i=0}^r\Big(\frac{m^{i+1}}{(i+1)!}\bar L_i\Big)\sum_{j=0}^r\Big(\frac{n^{j+d}}{(j+d)!}\bar I_{j+d}\Big)v\otimes  \big(I_{m+n}f(t)\big)\\&&
-\sum_{j=0}^r\Big(\frac{n^{j+d}}{(j+d)!}\bar I_{j+d}\Big)\sum_{i=0}^r\Big(\frac{m^{i+1}}{(i+1)!}\bar L_i\Big)v\otimes  \big(I_{m+n}f(t)\big)\\
&=&
n\sum_{i=0}^r\Big(\frac{n^{i+d}}{(i+d)!}\bar I_{i+d}\Big)v\otimes \big(I_{m+n}f(t)\big)\\
&&
+\sum_{i,j=0}^r\Big(\frac{m^{i+1}n^{j+d}}{(i+1)!(j+d)!}\big(\bar L_i\bar I_{j+d}-\bar I_{j+d}\bar L_i\big)\Big)v\otimes  \big(I_{m+n}f(t)\big)
\\ &=&
n\sum_{i=0}^r\sum_{j=\delta_{d,0}}^{i+1}\Big(\frac{m^{i+1-j}n^{j+d-1}}{(i+1-j)!(j+d-1)!}\bar I_{i+d}\Big)v\otimes  \big(I_{m+n}f(t)\big)
\\&=&
n\sum_{i=0}^r\Big( \frac{(m+n)^{i+d}}{(i+d)!}\bar I_{i+d}\Big)v\otimes  \big(I_{m+n}f(t)\big)=nI_{m+n}\big(v\otimes f(t)\big).
 \end{eqnarray*}
That is,  $L_mI_n-I_nL_m=nI_{n+m}$ holds on  $\mathcal{M}\big(V,\Omega(\lambda,\alpha,\beta)\big)$. Finally,  the relation   $I_mI_n-I_nI_m=0$  on $\mathcal{M}\big(V,\Omega(\lambda,\alpha,\beta)\big)$ is trivial.
 Thus,  the actions \eqref{Lm2.1}-\eqref{C1232.3} make
 $\mathcal{M}\big(V,\Omega(\lambda,\alpha,\beta)\big)$ into a non-weight $\H$-module.
\end{proof}
\begin{remark}\label{re2.1} Let $d\in\{0,1\},  r\in\Z_+$ and $V$ be  an irreducible $\bar\H_{r,d}$-module.
\begin{itemize}\lineskip0pt\parskip-1pt
\item[\rm(1)] $V$ must be infinite dimensional if ${\rm dim} V> 1$, since  any  irreducible finite dimensional module over the solvable Lie algebra $\bar\H_{r,d}$ is one-dimensional by Lie's Theorem.

\item[\rm(2)] Consider now    $V=\C v$  is one-dimensional. Then $\bar L_iv=\bar I_{i}v=0$
for any $i\in\N$, $\bar L_0v =\sigma v,\bar I_0v = \tau v$ for some $\sigma,\tau\in\C$. In this case $V$ is denoted by $V_{\sigma,\tau}$ and  it is clear that   $$\mathcal{M}\big(V_{\sigma,\tau},\Omega(\lambda,\alpha,\beta)\big)\cong\Omega(\lambda,\alpha-\sigma,\delta_{d,0}\beta\tau)\quad {\rm for\ any}\ \lambda\in\C^*\ {\rm and}\ \alpha,\beta\in\C,$$ where $\delta_{d,0}$ is the Kronecker delta.

\item[\rm(3)] Note that if $\bar I_{j+d} V=0$ for all $0\leq j\leq r$, then   $\mathcal{M}\big(V,\Omega(\lambda,\alpha,\beta)\big)$ reduces to a module over the  Virasoro algebra. So until further notice we always assume that $\bar I_{j+d} V\neq 0$ for some $0\leq j\leq r$. Choose such $r^\prime$ to be maximal such that $\bar I_{r^\prime+d} V\neq 0$. Then $\bar I_{r^\prime+d}$ is  a linear isomorphism by  \cite[Lemma 3.1]{CHS16} if $V$  is irreducible.
\end{itemize}
\end{remark}

 For $\lambda,\alpha,\beta\in\C$, the $\H$-module $A({\lambda,\alpha,\beta})$ of intermediate series has a $\C$-basis $\{v_{i}\mid i \in\Z\}$ with trivial central actions
 and
$$L_mv_n=\big(\lambda+n+m\alpha\big)v_{m+n},\ I_mv_n=\beta v_{m+n}\quad {\rm for}\ m,n\in\Z.$$
Note that $A({\lambda,\alpha,\beta})$ is reducible if and only if $\lambda\in\Z$, $\alpha\in\{0,1\}$ and $\beta=0$ (see \cite{LJ,KS}).
Let $d\in\{0,1\},r\in\Z_+$ and $V$ be an $\bar\H_{r,d}$-module.
Define  the action of $\H$ on $\mathcal{M}\big(V,A(\lambda,\alpha,\beta)\big):=V\otimes A(\lambda,\alpha,\beta)$ as follows
 \begin{eqnarray*}
 &&L_m\big(u\otimes v_n\big)=\Big(n+\lambda+\alpha m+\sum_{i=0}^r\Big(\frac{m^{i+1}}{(i+1)!}\bar L_i\Big)\Big)u\otimes v_{m+n},\\&&
I_m\big(u\otimes v_n\big)=\sum_{i=0}^r\Big(\frac{\beta m^{i+d}}{(i+d)!}\bar I_{i+d}\Big)u\otimes   v_{m+n},\\&&
 C_1\big(u\otimes v_n\big)= C_2\big(u\otimes v_n\big)= C_3\big(u\otimes v_n\big)=0,
 \end{eqnarray*}
where  $m,n\in\Z$ and $u\in V$.   Then one can check that under the given actions as above,  $\mathcal{M}\big(V,A(\lambda,\alpha,\beta)\big)$ becomes a weight $\H$-module. 

 \begin{defi}\label{defi-new}
 Let $V, W$ be two $\bar\H_{r,d}$-modules and $\alpha\in\C$.   Denote by $V^\alpha$  the $\bar\H_{r,d}$-module obtained from $V$  by modifying the $\bar L_0$-action as $\bar L_0-\alpha{\rm id}_{V}.$  An invertible linear map $\psi: V\rightarrow W$ is called an $\alpha$-isomorphism if $\psi(\bar L_i v)=\bar L_i\psi(v)$ and $\psi(\bar I_{i+d}v)=\alpha \bar I_{i+d}\psi(v)$ for any $i\in\Z$.
 \end{defi}

  It follows from the similar proof of  \cite[Theorem 4.1]{CHS16} that we have the following result.


\begin{theo}\label{th2.3}
Let $d_i\in\{0,1\}, r_i\in\Z_+, \lambda_i, \alpha_i,0\neq\beta_i\in\C$ and $V_i$ be an  irreducible $\bar \H_{r_i,d_i}$-module for $i=1,2$.
Then   $\mathcal{M}\big(V_1,A(\lambda_1,\alpha_1,\beta_1)\big)$ and  $\mathcal{M}\big(V_2,A(\lambda_2,\alpha_2,\beta_2)\big)$  are isomorphic as $\H$-modules if and only  if
  $\lambda_1-\lambda_2\in\Z,d_1=d_2$
and $V_1^{\alpha_1}\cong V_2^{\alpha_2}$ are $\beta_1^{-1}\beta_2$-isomorphic as  $\bar \H_{{\rm max}\{r_1,r_2\},d_1}$-modules.
\end{theo}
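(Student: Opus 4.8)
The plan is to follow the strategy of \cite[Theorem 4.1]{CHS16}, proving the two implications separately.

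\textbf{Sufficiency.} Suppose $\lambda_1-\lambda_2\in\Z$, $d_1=d_2=:d$, and $\psi:V_1^{\alpha_1}\to V_2^{\alpha_2}$ is a $\beta_1^{-1}\beta_2$-isomorphism of $\bar\H_{\max\{r_1,r_2\},d}$-modules. First note that re-indexing the basis $v_n\mapsto v_{n+k}$ gives $A(\lambda,\alpha,\beta)\cong A(\lambda+k,\alpha,\beta)$ for every $k\in\Z$, hence $\mathcal{M}\big(V,A(\lambda,\alpha,\beta)\big)\cong\mathcal{M}\big(V,A(\lambda+k,\alpha,\beta)\big)$; so we may assume $\lambda_1=\lambda_2=:\lambda$. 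Then I would set $\phi(u\otimes v_n):=\psi(u)\otimes v_n$. Since $C_1,C_2,C_3$ act as $0$ on both modules, it remains to check $\phi L_m=L_m\phi$ and $\phi I_m=I_m\phi$; expanding both sides via the defining actions of $\mathcal{M}\big(V,A(\lambda,\alpha,\beta)\big)$ and the relations $\psi(\bar L_i v)=\bar L_i\psi(v)$ for $i\geq 1$ and $\psi(\bar I_{i+d}v)=\beta_1^{-1}\beta_2\bar I_{i+d}\psi(v)$, together with the modification of the $\bar L_0$-action built into $V_i^{\alpha_i}$ (which is exactly what absorbs the $\alpha_i m$-term against the $m\bar L_0$-term appearing in $L_m$), both identities follow by a direct computation; the factor $\beta_i$ in the $I_m$-action matches the $\beta_1^{-1}\beta_2$ in the definition of $\psi$.

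\textbf{Necessity.} Let $\phi$ be an $\H$-module isomorphism from $\mathcal{M}\big(V_1,A(\lambda_1,\alpha_1,\beta_1)\big)$ to $\mathcal{M}\big(V_2,A(\lambda_2,\alpha_2,\beta_2)\big)$. Both modules are weight modules on which the center acts as $0$ and $L_0$ acts semisimply, with eigenvalue sets $\lambda_1+\Z$ and $\lambda_2+\Z$ (each weight space $V_i\otimes\C v_n$ is nonzero because $V_i\neq 0$). Hence $\lambda_1-\lambda_2\in\Z$, and after the reduction above we may assume $\lambda_1=\lambda_2=:\lambda$. Then $\phi$ carries the weight-$(\lambda+n)$ space $V_1\otimes v_n$ onto $V_2\otimes v_n$, so $\phi(u\otimes v_n)=\psi_n(u)\otimes v_n$ for a family of linear isomorphisms $\psi_n:V_1\to V_2$. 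Feeding this into $\phi L_m=L_m\phi$ and $\phi I_m=I_m\phi$ and writing $P_m^{(i)}:=\sum_{j\geq 0}\frac{\beta_i m^{j+d_i}}{(j+d_i)!}\bar I_{j+d_i}^{(i)}$ and $Q_m^{(i)}:=\alpha_i m\,\mathrm{id}+\sum_{j\geq 0}\frac{m^{j+1}}{(j+1)!}\bar L_j^{(i)}$, these become
\[
\psi_{m+n}\big((n+\lambda)u+Q_m^{(1)}u\big)=(n+\lambda)\psi_n(u)+Q_m^{(2)}\psi_n(u),\qquad \psi_{m+n}\big(P_m^{(1)}u\big)=P_m^{(2)}\psi_n(u)
\]
for all $m,n\in\Z$ and $u\in V_1$. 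The crux is to show that $\psi_n$ does not depend on $n$; granting this and putting $\psi:=\psi_0$, the first identity at $n=0$ is a polynomial identity in $m$ with $V_2$-valued coefficients, and comparing coefficients gives $\psi\bar L_j^{(1)}=\bar L_j^{(2)}\psi$ for $j\geq 1$ and (from the coefficient of $m$) compatibility of $\psi$ with the twisted $\bar L_0$-actions of $V_1^{\alpha_1}$ and $V_2^{\alpha_2}$, while comparing the lowest powers of $m$ in the second identity forces $d_1=d_2=:d$ (here the standing assumption $\bar I_{j+d}V_i\neq 0$ for some $j$ from Remark~\ref{re2.1}(3) is used) and the remaining coefficients give $\beta_1\psi\bar I_{j+d}^{(1)}=\beta_2\bar I_{j+d}^{(2)}\psi$. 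Thus $\psi$ is a $\beta_1^{-1}\beta_2$-isomorphism $V_1^{\alpha_1}\to V_2^{\alpha_2}$ of $\bar\H_{\max\{r_1,r_2\},d}$-modules, as required.

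\textbf{The main obstacle} is precisely the step ``$\psi_n$ is independent of $n$'', since a priori $\phi$ could act by an unrelated linear map on each weight space. To overcome it I would exploit that, $V_i$ being irreducible, the maximal operator $\bar I_{r'+d}$ of Remark~\ref{re2.1}(3) acts bijectively on $V_i$ (by \cite[Lemma 3.1]{CHS16}); since the $\bar I$'s pairwise commute and this top operator dominates $P_m^{(i)}$, one deduces that $P_m^{(i)}$ is bijective for $m\neq 0$, so the second identity gives $\psi_{m+n}=P_m^{(2)}\psi_n\big(P_m^{(1)}\big)^{-1}$; substituting this back into the first identity and using that the $P_m^{(i)}$ commute among themselves pins down $\psi_{m+n}=\psi_n$. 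Making this rigorous — in particular coping with the possible infinite-dimensionality of the $V_i$ and selecting the correct finite set of values of $m$ in the Vandermonde-type comparisons — is the only genuinely delicate point; everything else is bookkeeping.
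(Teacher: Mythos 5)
First, a remark on the comparison itself: the paper gives no proof of Theorem \ref{th2.3} — it only says the result ``follows from the similar proof of \cite[Theorem 4.1]{CHS16}'' — so your proposal has to stand on its own. Your sufficiency direction is the same routine verification such a proof performs (the twist built into $V_i^{\alpha_i}$ absorbing the $\alpha_i m$-terms, the factor $\beta_1^{-1}\beta_2$ absorbing the $\beta_i$'s), and in the necessity direction the weight-space argument giving $\lambda_1-\lambda_2\in\Z$ and the decomposition $\phi(u\otimes v_n)=\psi_n(u)\otimes v_n$ are correct.

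The genuine gap is exactly at the step you yourself flag as delicate, and the fix you propose does not work. (a) Bijectivity of $P_m^{(i)}=\sum_{j}\frac{\beta_i m^{j+d_i}}{(j+d_i)!}\bar I_{j+d_i}$ for every $m\neq 0$ does not follow from bijectivity of the single top operator $\bar I_{r'+d_i}$: by Remark \ref{re2.1}(1) the $V_i$ are infinite dimensional unless one dimensional, and an invertible operator plus a commuting operator need not be invertible (already for $d=0$, $r'=1$ you would need $\bar I_0+m\bar I_1$ injective for every nonzero integer $m$, which nothing guarantees). This is precisely why the paper's own computations (proof of Theorem \ref{th3.3}) never invert such sums but instead extract the top coefficient of a polynomial in a free integer parameter. (b) Even granting invertibility, the concluding sentence is not an argument: composing the $I$-relation for two shifts in either order gives the \emph{same} identity because the $P$'s commute, so commutativity yields no new information, and substituting $\psi_{m+n}=P_m^{(2)}\psi_n\big(P_m^{(1)}\big)^{-1}$ into the $L$-identity does not by itself force $\psi_{m+n}=\psi_n$; indeed ``$\psi_n$ independent of $n$'' is essentially equivalent to the intertwining relations you are trying to establish, so the circle must be broken by an actual computation. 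The standard device in a \cite{CHS16}-style proof is to apply compositions that return to a fixed weight space — $I_{-m}I_m$, $L_{-m}I_m$, $I_{-m}L_m$, $L_{-m}L_m$ on $V_1\otimes v_n$ — so that only one $\psi_n$ occurs, and then compare coefficients of powers of $m$, using injectivity of $\bar I_{r_i'+d_i}$ to control the top terms; the relations $d_1=d_2$, $\psi_n\bar L_j=\bar L_j\psi_n$, $\beta_1\psi_n\bar I_{j+d}=\beta_2\bar I_{j+d}\psi_n$ and finally the $n$-independence of $\psi_n$ all come out of that comparison. Relatedly, your claim that ``comparing the lowest powers of $m$ in the second identity forces $d_1=d_2$'' is doubly unjustified: one cannot compare coefficients in an identity in which $\psi_{m+n}$ itself varies with $m$, and the lowest operator $\bar I_{d_i}$ may act as zero — Remark \ref{re2.1}(3) only guarantees the top one is nonzero.
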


\section{Irreducibilities}  


\begin{theo}\label{th1}
Let $(\lambda,\alpha)\in(\C^*)^2$ or  $(\lambda,\beta)\in(\C^*)^2$. Assume   $\mathrm{Ind}(M)$ is  an  $\H$-module    defined by \eqref{ind2.2} for which $M$
satisfies the conditions in Theorem \ref{th2.1}. Then the tensor product $\Omega(\lambda,\alpha,\beta)\otimes\mathrm{Ind}(M)$ of $\H$-modules $\Omega(\lambda,\alpha,\beta)$ and $\mathrm{Ind}(M)$   is an irreducible $\H$-module.
\end{theo}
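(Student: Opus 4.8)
\smallskip
\noindent\textit{Plan of the proof.} Write $\Omega=\Omega(\lambda,\alpha,\beta)=\C[t]$, and fix the integers $k,e$ attached to $M$ as in Theorem \ref{th2.1}. Let $W$ be a nonzero $\H$-submodule of $\Omega\otimes\mathrm{Ind}(M)$; we must show $W=\Omega\otimes\mathrm{Ind}(M)$. The plan is to split the argument into two parts: (I) produce in $W$ a nonzero element of the form $1\otimes w$ with $w$ in the generating subspace $M\subseteq\mathrm{Ind}(M)$; and (II) bootstrap from such an element, using Theorem \ref{th2.1}(i) (irreducibility of $\mathrm{Ind}(M)$), to recover the whole module.

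For part (I) one works with two filtrations simultaneously: the degree filtration on $\Omega=\C[t]$, and the standard PBW ``depth'' filtration on $\mathrm{Ind}(M)=\mathcal{U}(\H)\otimes_{\mathcal{U}(\H_e)}M$, in which an element has depth $\le p$ if it lies in the span of products of at most $p$ of the lowering operators $L_{-1},L_{-2},\dots,I_{-e-1},I_{-e-2},\dots$ applied to $M$. Starting from a nonzero $v=\sum_i f_i(t)\otimes u_i\in W$ with the $u_i$ linearly independent and of depth $\le p$, the first (and main) reduction drives $p$ down to $0$. Since the high modes $L_m,I_m$ are locally nilpotent on $\mathrm{Ind}(M)$ by Theorem \ref{th2.1}(ii), one may choose $m$ large so that $L_m$ (resp.\ $I_m$) maps the depth-$\le q$ part of $\mathrm{Ind}(M)$ into the depth-$\le q-1$ part for every $q\le p$; this is verified by induction on $q$ from the commutators $[L_m,L_{-j}]=-(j+m)L_{m-j}$, $[L_m,I_{-j}]=-jI_{m-j}$, with base case $q=0$ being the annihilation $L_mM=I_mM=0$ of Theorem \ref{th2.1}(2). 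One then applies such operators to $v$ and forms suitable finite differences --- in the mode $m$ and in the number of applications --- to cancel the unwanted lower-order contributions arising from the simultaneous action on the $\Omega$-factor (where $L_m$ and $I_m$ act as a degree-$\le1$ multiplication composed with the shift $t\mapsto t-m$); the net effect is a nonzero element $v'=\sum_i g_i(t)\otimes w_i\in W$ all of whose $\mathrm{Ind}(M)$-parts $w_i$ now lie in $M$. For such a $v'$, part (I) is finished cleanly: for $m>k+e$ we have $L_mw_i=0$ (Theorem \ref{th2.1}(2)), so $\lambda^{-m}L_mv'=\sum_i(t-m\alpha)g_i(t-m)\otimes w_i$ (using $\lambda\ne0$), which is a polynomial in $m$ of degree at most $1+\max_i\deg g_i$, and extracting its leading coefficient in $m$ by finite differences (when $\alpha\ne0$; use $\lambda^{-m}\beta^{-1}I_mv'$ when $\beta\ne0$) gives an element of $W$ that is constant in $t$, that is, of the form $1\otimes w$ with $0\ne w\in M$.

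I expect part (I) --- in particular the depth reduction, where one has to peel the PBW lowering operators off the $\mathrm{Ind}(M)$-factor even though every element of $\H$ also acts on the $\Omega$-factor --- to be the main obstacle; it is exactly there that the local nilpotency (Theorem \ref{th2.1}(ii)), the annihilation $L_mM=I_mM=0$ (Theorem \ref{th2.1}(2)), and the hypotheses $\lambda\ne0$ and $(\alpha,\beta)\ne(0,0)$ are all used. Part (II) is then routine. Given $1\otimes w\in W$ with $0\ne w\in M$, for $m>k+e$ we get $L_m(1\otimes w)=\lambda^m(t-m\alpha)\otimes w\in W$, which with $1\otimes w\in W$ yields $t\otimes w\in W$; applying $L_m$ repeatedly to $t^{\,j}\otimes w$ and subtracting the lower-degree elements already produced gives $t^{\,j}\otimes w\in W$ for all $j\ge0$, i.e.\ $\Omega\otimes w\subseteq W$. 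Finally, for any $g(t)\in\C[t]$ and $x\in\H$ the tensor-module identity $x\big(g(t)\otimes w\big)=\big(x\cdot g(t)\big)\otimes w+g(t)\otimes(xw)$ shows, since its first summand lies in $\Omega\otimes w\subseteq W$, that $g(t)\otimes(xw)\in W$; iterating over $x$ gives $\Omega\otimes\big(\mathcal{U}(\H)w\big)\subseteq W$, and the irreducibility of $\mathrm{Ind}(M)$ together with $w\ne0$ forces $\mathcal{U}(\H)w=\mathrm{Ind}(M)$, whence $W=\Omega\otimes\mathrm{Ind}(M)$.
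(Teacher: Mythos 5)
Your parts (I.b) and (II) are exactly the paper's argument (extract the leading coefficient in $m$ of $\lambda^{-m}I_m$ or $\lambda^{-m}L_m$ applied to a chosen element to kill the $t$-degree, then bootstrap $\Omega\otimes w\subseteq W$ and use irreducibility of $\mathrm{Ind}(M)$), but the step you yourself flag as the crux, the ``depth reduction'' (I.a), is a genuine gap, and as stated it is wrong. First, no single large $m$ maps the whole depth-$\le q$ part of $\mathrm{Ind}(M)$ into the depth-$\le q-1$ part: that subspace involves lowering operators $L_{-j}$, $I_{-j}$ of unbounded index, and for $j>m$ the commutator $[L_m,L_{-j}]=-(j+m)L_{m-j}$ is again a lowering operator, so depth is not decreased. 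Second, and more fundamentally, for a \emph{fixed} vector $v\in\mathrm{Ind}(M)$ your commutator induction (base case $L_mM=I_mM=0$ from Theorem \ref{th2.1}(2)) proves not depth reduction but outright annihilation: $L_mv=I_mv=0$ for all $m\ge K(v)$. Consequently, applying such high modes to $\sum_i f_i(t)\otimes u_i$ acts only on the $\Omega$-factor and leaves the components $u_i$ untouched, so it can never lower their depth; to affect the $u_i$ you must use modes below the annihilation threshold, and then the cross terms $\sum_i \lambda^m(t-m\alpha)f_i(t-m)\otimes u_i$ carrying the original depth-$p$ components reappear. Your proposal to remove them by ``finite differences in $m$ and in the number of applications'' is only asserted; it is not clear it can work, since the ``wanted'' terms $f_i\otimes L_mu_i$ also vary with $m$ in an uncontrolled way (making this precise would essentially amount to re-proving the irreducibility of $\mathrm{Ind}(M)$ rather than using Theorem \ref{th2.1}(i) as a black box).

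The good news is that step (I.a) is unnecessary, and deleting it turns your proposal into the paper's proof. The annihilation fact above holds for \emph{every} $v\in\mathrm{Ind}(M)$, not just for $v\in M$; so take any nonzero $\sum_{i=0}^n t^i\otimes v_i\in W$ with $v_n\neq0$, take $m\ge\max_iK(v_i)$ (instead of $m>k+e$), and run your leading-coefficient extraction verbatim: $\lambda^{-m}I_m$ (when $\beta\neq0$) or $\lambda^{-m}L_m$ (when $\alpha\neq0$) gives a polynomial in $m$ whose top coefficient is a nonzero element of $\C\otimes v_n\subseteq W$, i.e.\ $1\otimes v_0\in W$ for some $0\neq v_0\in\mathrm{Ind}(M)$. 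Your part (II) never actually uses $w\in M$ either: it only needs $L_mw=0$ for all large $m$, which again holds for any $w\in\mathrm{Ind}(M)$ with $m\ge K(w)$, and then $\Omega(\lambda,\alpha,\beta)\otimes\mathcal{U}(\H)w\subseteq W$ together with the irreducibility of $\mathrm{Ind}(M)$ (Theorem \ref{th2.1}(i)) finishes the proof. This is precisely how the paper argues (it treats the case $\beta\in\C^*$ this way and quotes the Virasoro tensor-product result for $\alpha\in\C^*$), so once the superfluous reduction to $M$ is removed, your argument is correct and complete.
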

\begin{proof}
 For any $v\in \mathrm{Ind}(M)$,  there exists $K(v)\in\Z_+$ such that $I_m\cdot v=L_m\cdot v=0$ for all $m\geq K(v)$ by Theorem \ref{th2.1}. Suppose $P$ is a nonzero submodule of $\Omega(\lambda,\alpha,\beta)\otimes\mathrm{Ind}(M)$.
Choose a nonzero
\begin{equation*}\mbox{$w=\sum\limits_{i=0}^nt^i\otimes v_i\in P$  with $0\ne v_n\in \mathrm{Ind}(M)$   and $n\in\Z_+$ is minimal.}\end{equation*}
The case for $\alpha\in\C^*$ was proved  in \cite[Theorem 1]{TZ}, thus we only need to consider the case for $\beta\in\C^*$.
\begin{clai}
$n=0.$
\end{clai}

Let $K=\mathrm{max}\{K(v_i)\mid i=0,1,\ldots,n\}$. Then we have
$$\lambda^{-m}I_m\cdot w=\mbox{$\sum\limits_{i=0}^n$}\beta(t-m)^i\otimes v_i\in P\mbox{ \ for  $m\geq K$.} $$
Note that the right-hand side of the above  can also be written as
$$\mbox{$\sum\limits_{i=0}^n$}m^i w_i\in P $$  for  some $w_i\in \Omega(\lambda,\alpha,\beta)\otimes\mathrm{Ind}(M)$ (independent of the choice of $m$) with $w_n=\beta(-1)^n\otimes v_n\neq0.$  It follows from that $w_n\in P$.
 Thus,  $n$ must be zero by its minimality, proving the claim.

 To complete the proof, it  suffices to show the following claim.
\begin{clai}
$P=\Omega(\lambda,\alpha,\beta)\otimes\mathrm{Ind}(M).$
\end{clai}

By Claim 1, we have $1\otimes v_0\in P$ for some nonzero $v_0\in \mathrm{Ind}(M)$. Using
\begin{eqnarray*}
L_m\cdot (t^k\otimes v_0)&\!\!\!=\!\!\!&(\lambda^m(t-ma)(t-m)^k)\otimes v_0
\\&\!\!\!=\!\!\!&\lambda^m(t-m)^{k+1}\otimes v_0-\lambda^mm(\alpha-1)(t-m)^k\otimes v_0
\end{eqnarray*}
for $m\geq K(v_0), k\in \Z_+$ and by induction on $k$,
we deduce that  $t^k\otimes v_0\in P$ for $k\in\Z_+$,  i.e., $\Omega(\lambda,\alpha,\beta)\otimes v_0\subseteq P$.
It follows that $\Omega(\lambda,\alpha,\beta)\otimes \mathcal U(\H) v_0\subseteq P.$ Thus,
 $P=\Omega(\lambda,\alpha,\beta)\otimes\mathrm{Ind}(M)$, since the nonzero $\H$-submodule $\mathcal U(\H) v_0$ of ${\rm Ind}(M)$  generated by $v_0$ is equal to ${\rm Ind}(M)$ by the irreducibility of ${\rm Ind}(M)$.
\end{proof}

Now we describe the following two examples of the modules in Theorem \ref{th1},  which will be discussed in detail in Section \ref{se5}.
\begin{exam}\label{ex3.3}\rm\begin{itemize}\parskip-1pt\item[{\rm (i)}]
 Let $h\in\C,\,\underline d=(d_0,d_1,d_2,d_3)\in\C^4$ with $d_3=0$. Assume  $J_1$ is the left ideal of
$\mathcal{U}(\mathfrak{h}\oplus\H_+)$ generated by
$L_m,I_m,L_0-h$ and $C_i-d_i$ for $i=0,1,2,3$, $m\in \Z_{+}$. Denote
$\bar M:=\mathcal{U}(\mathfrak{h}\oplus\H_+)/ J_1$. Then $V=\mathrm{Ind}(\bar M)$  is the classical Verma module (see, e.g., \cite{B,SJS}).
By Theorem \ref{th2.1} (cf. \cite[Theorem 1]{B}),
 we obtain that if  $d_0+(n-1)d_2\neq0$ for  $n\in \Z\setminus \{0\}$, then $V$  is both an irreducible $\H$-module and a locally nilpotent
 module over $\H_+$.
From Theorem \ref{th1},  we obtain that $\Omega(\lambda,\alpha,\beta)\otimes V$  is an irreducible $\H$-module
if  $d_0+(n-1)d_2\neq0$ for $n\in \Z\setminus \{0\}$ and either $(\lambda,\alpha)\in(\C^*)^2$ or  $(\lambda,\beta)\in(\C^*)^2$. 

\item[{\rm (ii)}]
Let $(\lambda_1,\lambda_2,\mu_1)\in\C^3,\,\underline e=(e_0,e_1,e_2,e_3)\in\C^4$ with $e_3=0$. Assume that $J_2$ is the left ideal of
$\mathcal{U}(\H_+)$ generated by $L_1-\lambda_1,L_2-\lambda_2,L_3,L_4,\ldots,I_1-\mu_1,I_2,I_3,\ldots,C_i-e_i$ for $i=0,1,2,3$. Denote
$\tilde{M}:=\mathcal{U}(\H_+)/J_2$. Then $V=\mathrm{Ind}(\tilde{M})$ is the classical Whittaker module (see, e.g., \cite{LWZ,CG}).
 By Theorem \ref{th2.1} (cf. \cite[Example 10]{CG}),
  we obtain that if  $e_0+(n-1)e_2\neq0$ for all $n\in \Z\setminus \{0\}$ and $\mu_1\neq0$, then $V$ is both an irreducible $\H$-module and a locally nilpotent
 module over $\H_+^{(2)}=\mathrm{span}_{\C}\{L_m,I_m\mid m>2\}$. From Theorem \ref{th1}, we obtain that $\Omega(\lambda,\alpha,\beta)\otimes V$   is an irreducible $\H$-module if $e_0+(n-1)e_2\neq0$ for all $n\in \Z\setminus \{0\}$ and either $(\lambda,\alpha,\mu_1)\in(\C^*)^3$ or  $(\lambda,\beta,\mu_1)\in(\C^*)^3$.
\end{itemize}\end{exam}


Next we are going to characterise the reducibility of  $\mathcal{M}\big(V,\Omega(\lambda,\alpha,\beta)\big).$
For any $m\in\Z_+,n\in\Z$, denote
$$J_n^0=1\ \mbox{and}\ J_n^m=\prod_{j=n+1}^{n+m}(t-j)\quad \mbox{for}\ m>0.$$
Note that $\{J_n^m\mid m\in\Z_+\}$ forms a basis of $\Omega(\lambda,\alpha,\beta)$  for any $n\in\Z$.
By the action  of $\H$ on  $\Omega(\lambda,\alpha,\beta)$, it is easy to check that
\begin{equation}\label{lmj3.1}
L_mJ_n^k=\lambda^m(t-m\alpha)J_{m+n}^{k}\ \mbox{and}\ I_mJ_n^k=\lambda^m\beta J_{m+n}^{k}\quad {\rm for}\ m,n\in\Z,k\in\Z_+.
\end{equation}

Now we are ready to state the other main result of this section.

\begin{theo}\label{th3.3}
Let $\lambda\in\C^\ast$ and $V$ be  an irreducible $\bar \H_{r,d}$-module.   Then  $\mathcal{M}\big(V,\Omega(\lambda,\alpha,\beta)\big)$ is  reducible if and only if  $V\cong V_{\alpha,\delta_{d,0}\tau}$ for some $\tau\in\C$ such that $\delta_{d,0}\beta\tau=0$.
\end{theo}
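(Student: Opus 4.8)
The plan is to prove both directions by analyzing what a nonzero proper submodule of $\mathcal{M}\big(V,\Omega(\lambda,\alpha,\beta)\big)$ must look like, using the filtration of $\C[t]$ by polynomial degree together with the basis $\{J_n^m\}$ and the formulas \eqref{lmj3.1}. For the ``if'' direction, recall from Remark \ref{re2.1}(2) that when $V\cong V_{\alpha,\delta_{d,0}\tau}$ with $\delta_{d,0}\beta\tau=0$ we have $\mathcal{M}\big(V,\Omega(\lambda,\alpha,\beta)\big)\cong\Omega(\lambda,\alpha-\alpha,\delta_{d,0}\beta\tau)=\Omega(\lambda,0,0)$, which is reducible since it fails the criterion ($\alpha\in\C^*$ or $\beta\in\C^*$) recalled after \eqref{Om1}; concretely $t\,\C[t]$ is a proper nonzero submodule. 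So the ``if'' direction is immediate from the already-cited facts.

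For the ``only if'' direction, suppose $P$ is a nonzero proper $\H$-submodule; I want to force $V$ to be one-dimensional of the stated form. First I would pick a nonzero element of $P$ of minimal ``$t$-degree'': write it as $w=\sum_{i=0}^n u_i\otimes J_0^i$ with $u_n\ne 0$ (using that $\{J_0^i\}$ is a basis of $\C[t]$), $n$ minimal among nonzero elements of $P$. Applying $I_m$ for large $|m|$ and using \eqref{Im2.2} together with $I_mJ_0^i=\lambda^m\beta J_m^i$, the image is a polynomial in $m$ whose top-degree-in-$m$ coefficient (after extracting) lies in $P$ and has strictly smaller $t$-degree unless that coefficient vanishes; combined with the analogous computation for $L_m$ via \eqref{Lm2.1}, minimality of $n$ should pin down $w$ to live in the ``lowest stratum'', i.e.\ I expect to reduce to the case where $P$ contains an element of the form $v\otimes 1$ with $0\ne v\in V$. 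The key input here is that $V$ is irreducible over $\bar\H_{r,d}$ together with Remark \ref{re2.1}(3): if some $\bar I_{j+d}$ acts nontrivially, then the maximal such $\bar I_{r'+d}$ is a linear isomorphism on $V$ (by \cite[Lemma 3.1]{CHS16}), and one uses this invertibility to show that from $v\otimes 1\in P$ one can generate $\bar X v\otimes f(t)$ for all $\bar X\in\mathcal{U}(\bar\H_{r,d})$ and all $f(t)$, hence all of $V\otimes\C[t]$ by irreducibility of $V$ — contradicting properness. This forces $\bar I_{j+d}V=0$ for all $j$, i.e.\ (by Remark \ref{re2.1}(3)) the module is really a Virasoro module $\Omega(\lambda,\alpha-\sigma,0)$ up to the $\bar L_0$-twist; then the reducibility criterion for Virasoro $\Omega$-modules forces $\alpha-\sigma=0$, and irreducibility of $V$ over the solvable algebra forces $\dim V=1$ (Remark \ref{re2.1}(1)), giving $V\cong V_{\alpha,\delta_{d,0}\tau}$ with $\delta_{d,0}\beta\tau=0$.

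The main obstacle I anticipate is the ``climbing'' step: showing that $v\otimes 1\in P$ together with the action of the $I_m$ and $L_m$ actually generates $\bar X v\otimes \C[t]$ for arbitrary $\bar X$. One has to be careful because the $\bar I_{i+d}$ and $\bar L_i$ act on the $V$-factor while simultaneously $x_m$ shifts the polynomial factor, so extracting a single $\bar I_{r'+d}$-action requires a Vandermonde-type argument in the variable $m$ to separate the various $\frac{m^{i+d}}{(i+d)!}\bar I_{i+d}$ terms appearing in \eqref{Im2.2}; once the top term $\bar I_{r'+d}$ is isolated and known invertible, a downward induction peels off the lower ones and simultaneously produces arbitrary powers of $t$ in the second factor (as in the proof of Theorem \ref{th1}, via $L_m(t^k\otimes v)$). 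Making the bookkeeping of this double action precise — and correctly handling the $d=0$ versus $d=1$ distinction, where the range of summation indices differs — is where the real work lies; everything else reduces to the Virasoro-case facts and the structure remarks already established.
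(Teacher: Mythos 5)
Your ``if'' direction and your main ``climbing'' argument (Vandermonde extraction in $m$ of the top $\bar I_{r^\prime+d}$-term, then regeneration of $\bar X v\otimes\C[t]$ and use of irreducibility of $V$) are essentially the paper's argument for the case $\beta\neq0$, so that part of the plan is sound. The genuine gap is in how you close the residual case. Your climbing step uses the $I_m$-action \eqref{Im2.2}, which carries the overall factor $\beta$: if $\beta=0$ the operators $I_m$ annihilate the whole module no matter how $\bar I_{j+d}$ acts on $V$, so your dichotomy ``some $\bar I_{j+d}$ acts nontrivially versus $\bar I_{j+d}V=0$'' is not the right one, and the branch $\beta=0$ with $\bar I_{j+d}V\neq0$ is covered by neither half of your argument. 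More seriously, in the degenerate branch you assert that the module ``is really a Virasoro module $\Omega(\lambda,\alpha-\sigma,0)$ up to the $\bar L_0$-twist'' and that ``irreducibility of $V$ over the solvable algebra forces $\dim V=1$ (Remark \ref{re2.1}(1))''. Both claims presuppose what has to be proved: the module there is $V\otimes\C[t]$ with the action \eqref{Lm2.1}, which is an $\Omega$-type module only when $\dim V=1$; and Remark \ref{re2.1}(1) says only that $\dim V>1$ implies $\dim V=\infty$ --- it does not exclude infinite-dimensional irreducible $\bar\H_{r,d}$-modules, which are precisely the main case of the theorem. So your ``only if'' direction never rules out an infinite-dimensional $V$ once the $I_m$'s act trivially on the tensor module.

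What is missing is exactly the Virasoro-theoretic input the paper uses at that point: for infinite-dimensional irreducible $V$ the $L$-action alone already makes $V\otimes\C[t]$ irreducible, which the paper obtains by quoting \cite[Theorem 3.2]{LZ} when $\beta=0$ (and by its $I_m$-extraction argument when $\beta\neq0$). Structurally the paper also avoids your submodule-chasing in the degenerate case altogether: it first settles $\dim V<\infty$ by Lie's theorem plus Remark \ref{re2.1}(2) and the known criterion for $\Omega(\lambda,\alpha-\sigma,\delta_{d,0}\beta\tau)$, and then shows outright irreducibility when $\dim V=\infty$, splitting on $\beta=0$ versus $\beta\neq0$. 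To repair your write-up you would need either to invoke \cite[Theorem 3.2]{LZ} (or prove its analogue using only the $L_m$-action) for the case where the $I_m$'s kill the module, or to restructure along the paper's finite/infinite dichotomy; as it stands, the step ``reducible $\Rightarrow\dim V=1$'' is unsupported and the $\beta=0$ case is untreated.
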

\begin{proof}
Consider first that  $V$ is finite dimensional. Then  $V\cong V_{\sigma,\tau}$ for some  $\sigma,\tau\in\C$ and $\mathcal{M}\big(V_{\sigma,\tau},\Omega(\lambda,\alpha,\beta)\big)\cong \Omega(\lambda,\alpha-\sigma,\delta_{d,0}\beta\tau)$ by Remark \ref{re2.1}(2). But we know that $\Omega(\lambda,\alpha-\sigma,\delta_{d,0}\beta\tau)$ is reducible if and only if $\alpha=\sigma$ and $\delta_{d,0}\beta\tau=0$.    So in this case the statement is true.

Assume that $V$ is  infinite dimensional. To complete the proof, it suffices to show that $\mathcal{M}\big(V,\Omega(\lambda,\alpha,\beta)\big)$ is irreducible. For this, let $M$ be a nonzero submodule of  $\mathcal{M}\big(V,\Omega(\lambda,\alpha,\beta)\big)$.
Without loss of generality, we may
assume that $\lambda=1$. If $\beta=0$, then $\mathcal{M}\big(V,\Omega(\lambda,\alpha,\beta)\big)$ reduces to a module over the Virasoro algebra, which is irreducible  by \cite[Theorem 3.2]{LZ}.

Now consider the case $\beta\neq0$. Let $r^\prime$ be the nonnegative integer  as in  Remark \ref{re2.1}(3) and  $u=\sum_{m=0}^{p}v_m\otimes J_0^m$  a nonzero element in $M$ with $v_p\neq0$. It follows from \eqref{Im2.2} and the second relation in \eqref{lmj3.1} that
$$I_n(v_m\otimes J_0^m)=\sum_{i=0}^{r^\prime}\big(\frac{n^{i+d}}{(i+d)!}\bar I_{i+d}\big)v_m\otimes \beta J_n^m\quad{\rm for}\ n\in\Z, m\in\Z_+.$$
Then we can check that
\begin{eqnarray*}
I_kI_{n-k}(v_m\otimes J_0^m)&=&I_k\Big(\sum_{i=0}^{r^\prime}\big(\frac{(n-k)^{i+d}}{(i+d)!}\bar I_{i+d}\big)v_m\otimes  \beta J_{n-k}^m\Big)\\
&=&\sum_{i=0}^{r^\prime}\Big(\frac{k^{i+d}}{(i+d)!}\bar I_{i+d}\Big)\sum_{i=0}^{r^\prime}\Big(\frac{(n-k)^{i+d}}{(i+d)!}\bar I_{i+d}\Big)v_m\otimes \beta^2 J_{n}^m.
\end{eqnarray*}
Since $k$ is arbitrary, we can view $k$ as  a variable. Observe that the coefficient of $k^{2r^\prime+2d}$ in  $I_kI_{n-k}u$ is
 $$\frac{\beta^2}{((r^\prime+d)!)^2}\sum_{m=0}^{p}\bar I_{r^\prime+d}^2v_m\otimes  J_{n}^m\in M \quad {\rm for }\ n\in\Z.$$
Similarly, viewing $n$ as a variable we get
$\bar I_{r^\prime+d}^2v_p\otimes 1\in M$. Set $v=\bar I_{r^\prime+d}^2v_p$.  Now by
$L_0^n\big(v\otimes 1\big)=v\otimes t^n$ for $n\in\Z$ and the injectivity of $\bar I_{r^\prime+d}$ we see that
$ v\otimes \Omega(\lambda,\alpha,\beta)$ is nonzero subspace of $M$. It follows from
\begin{eqnarray*}&M\ni L_n(v\otimes J_{k-n}^m)=v\otimes(t-n\alpha)J_{k}^m+\sum_{i=0}^{r^\prime}\big(\frac{n^{i+1}}{(i+1)!}\bar L_i\big)v\otimes J_{k}^m\\
&{\rm and}\quad M\ni
I_n(v\otimes J_{k-n}^m)=\sum_{i=0}^{r^\prime}\big(\frac{n^{i+d}}{(i+d)!}\bar I_{i+d}\big)v\otimes  \beta J_{k}^m\end{eqnarray*} that
both $\sum_{i=0}^{r^\prime}\big(\frac{n^{i+1}}{(i+1)!}\bar L_i\big)v\otimes J_{k}^m$ and $\sum_{i=0}^{r^\prime}\big(\frac{n^{i+d}}{(i+d)!}\bar I_{i+d}\big)v\otimes  J_{k}^m$
lie in $M$ for $k,m\in\Z,n\in\Z_+$. In particular,  $\bar L_{i}v\otimes  \Omega(\lambda,\alpha,\beta)\in M$ and $\bar I_{i+d}v\otimes  \Omega(\lambda,\alpha,\beta)\in M$  for $i=0,1,\ldots,r^\prime.$ Then we must have $M=\mathcal{M}\big(V,\Omega(\lambda,\alpha,\beta)\big)$, since $V$ is irreducible. This shows the irreducibility of $\mathcal{M}\big(V,\Omega(\lambda,\alpha,\beta)\big)$,  completing the proof of Theorem \ref{th3.3}.
\end{proof}

\section{Isomorphism classes}  

\begin{theo}\label{th2}
Let $\alpha_1, \beta_1\in \C$, $(\lambda_1,\lambda_2,\alpha_2)\in(\C^*)^3$ or  $(\lambda_1,\lambda_2,\beta_2)\in(\C^*)^3$.
 Assume $\mathrm{Ind}(M_1)$ and $\mathrm{Ind}(M_2)$ are   $\H$-modules  defined by \eqref{ind2.2} for which $M_1$ and $M_2$ satisfy the conditions in Theorem $\ref{th2.1}$.  Then   $\Omega(\lambda_1,\alpha_1,\beta_1)\otimes \mathrm{Ind}(M_1)$ and   $\Omega(\lambda_2,\alpha_2,\beta_2)\otimes \mathrm{Ind}(M_2)$  are isomorphic
as $\H$-modules if and only if $(\lambda_1,\alpha_1,\beta_1)=(\lambda_2,\alpha_2,\beta_2)$ and $\mathrm{Ind}(M_1)\cong \mathrm{Ind}(M_2)$
 as $\H$-modules.
\end{theo}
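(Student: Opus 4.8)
The forward direction is immediate: if $(\lambda_1,\alpha_1,\beta_1)=(\lambda_2,\alpha_2,\beta_2)=:(\lambda,\alpha,\beta)$ and $\psi\colon\mathrm{Ind}(M_1)\to\mathrm{Ind}(M_2)$ is an $\H$-isomorphism, then, since the $\H$-action on a tensor product is diagonal, $\mathrm{id}_{\C[t]}\otimes\psi$ is an $\H$-isomorphism $\Omega(\lambda,\alpha,\beta)\otimes\mathrm{Ind}(M_1)\to\Omega(\lambda,\alpha,\beta)\otimes\mathrm{Ind}(M_2)$. For the converse, write $\Omega_i=\Omega(\lambda_i,\alpha_i,\beta_i)$, $N_i=\mathrm{Ind}(M_i)$, and let $\phi\colon\Omega_1\otimes N_1\to\Omega_2\otimes N_2$ be an $\H$-isomorphism. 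The engine of the argument is the fact, already used in the proof of Theorem \ref{th1}, that every $v\in N_i$ is killed by $L_m$ and $I_m$ for all $m\gg0$; hence for $w=\sum_k f_k\otimes v_k\in\Omega_i\otimes N_i$ and $m$ large one has $L_mw=\lambda_i^m\sum_k(t-m\alpha_i)f_k(t-m)\otimes v_k$ and $I_mw=\lambda_i^m\beta_i\sum_k f_k(t-m)\otimes v_k$, so that $\lambda_i^{-m}L_mw$ and $\lambda_i^{-m}I_mw$ become polynomials in $m$ with values in $\Omega_i\otimes N_i$.

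The strategy is to recover $\lambda,\beta,\alpha$ one at a time as invariants of the abstract $\H$-module and then to split off the $\mathrm{Ind}$-factor. \emph{Recovering $\lambda$:} applying $\phi$ to $L_m(1\otimes v_0)=\lambda_1^m(t-m\alpha_1)\otimes v_0$ (for $0\ne v_0\in N_1$, $m$ large) and comparing with $L_m\phi(1\otimes v_0)$ yields an identity $\lambda_1^mp(m)=\lambda_2^mq(m)$ for all large $m$, where $p,q$ are nonzero polynomials in $m$ with values in $\Omega_2\otimes N_2$; this forces $\lambda_1=\lambda_2=:\lambda$. \emph{Recovering $\beta$:} a direct computation gives $I_mI_n=\beta_i\,I_{m+n}$ on every vector for $m,n\gg0$; transporting this through $\phi$ and testing on the vectors $1\otimes z$, on which $I_k$ acts by the scalar $\lambda^k\beta_2$ for $k$ large, gives $\beta_1=\beta_2=:\beta$. \emph{Recovering $\alpha$:} if $\beta\ne0$, a similar computation gives $L_mI_n-L_{m'}I_{n'}=\alpha_i(n-n')I_{m+n}$ whenever $m+n=m'+n'$ are large; intertwining through $\phi$ and using that $I_{m+n}$ acts nontrivially yields $\alpha_1=\alpha_2$. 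If $\beta=0$, the hypotheses force $\alpha_2\ne0$; comparing the $m$-degrees of the two sides of $\phi(L_m(1\otimes v_0))=L_m\phi(1\otimes v_0)$ first shows $\phi(1\otimes v_0)\in\C\otimes N_2$, and then comparison of the coefficients of $m^1$ gives $\alpha_1=\alpha_2$.

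With the three parameters matched we have $\Omega_1=\Omega_2=:\Omega=\Omega(\lambda,\alpha,\beta)$ and $\phi\colon\Omega\otimes N_1\to\Omega\otimes N_2$. Next one shows that $\phi$ respects the filtration $F_d^{(i)}=\C[t]_{\le d}\otimes N_i$ by $t$-degree: if $w$ has $t$-degree $d$, then $\lambda^{-m}L_mw$ is a polynomial in $m$ of degree exactly $d+\varepsilon$, with $\varepsilon=1$ if $\alpha\ne0$ and $\varepsilon=0$ if $\alpha=0$, and with nonzero leading coefficient; since $\phi$ is linear and injective it preserves this $m$-degree, and reading it off on the target side forces $\deg\phi(w)=\deg w$. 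Hence $\phi(F_d^{(1)})=F_d^{(2)}$ for all $d$, so $\phi$ induces bijections $\bar\phi_d\colon F_d^{(1)}/F_{d-1}^{(1)}\to F_d^{(2)}/F_{d-1}^{(2)}$, which under the identifications $F_d^{(i)}/F_{d-1}^{(i)}\cong N_i$ become bijections $N_1\to N_2$. Since $L_m$ acts on the graded piece of degree $d$ by the scalar $\lambda^m$ (mapping it to the piece of degree $d+1$), all the $\bar\phi_d$ coincide with a single bijection $\psi\colon N_1\to N_2$; and the induced actions of $I_m$ and of $C_1,C_2,C_3$ on the graded pieces force $\psi$ to commute with all $I_m$ and with the central elements.

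The one thing invisible on the associated graded is the $L_m$-equivariance of $\psi$, and supplying it will be the main obstacle: because $L_m$ shifts the filtration while $I_m$ preserves it, one must pass to the finer subquotients $F_d^{(i)}/F_{d-2}^{(i)}$. Writing $\phi(t^d\otimes v)\equiv t^d\otimes\psi(v)+t^{d-1}\otimes\rho_d(v)\pmod{F_{d-2}^{(i)}}$ for linear maps $\rho_d$ and intertwining $L_m$ on $F_d^{(i)}/F_{d-2}^{(i)}\to F_{d+1}^{(i)}/F_{d-1}^{(i)}$ produces the identity $\psi(L_mv)-L_m\psi(v)=\lambda^m(\rho_d-\rho_{d+1})(v)$, valid for all $m$; specializing to $m\gg0$, where $L_mv$ and $L_m\psi(v)$ both vanish, gives $\rho_d=\rho_{d+1}$ and hence $\psi L_m=L_m\psi$ for every $m$. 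Thus $\psi$ is an $\H$-isomorphism $\mathrm{Ind}(M_1)\cong\mathrm{Ind}(M_2)$, which together with $\Omega_1=\Omega_2$ completes the proof. (A recurring point of care is the degenerate case $\beta=0$, in which $I_m$ carries no information about the $\C[t]$-factor, so the recovery of $\alpha$ is rerouted through $L_m$ as indicated above; the filtration argument already uses $L_m$ alone and needs no change.)
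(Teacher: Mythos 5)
Your argument is correct in substance, but it is organized rather differently from the paper's proof, so a comparison is in order. The paper fixes $1\otimes v$, writes $\psi(1\otimes v)=\sum_{i=0}^n t^i\otimes v_i$, and uses the vanishing of $(\lambda_1^{-m_1}I_{m_1}-\lambda_1^{-m_2}I_{m_2})$ on $1\otimes v$ when $\beta_2\neq0$ (resp.\ the analogous $L$-argument quoted from \cite[Theorem 2]{TZ} when $\alpha_2\neq0$) to force $n=0$; this immediately yields a linear bijection $\tau$ with $\psi(1\otimes v)=1\otimes\tau(v)$, after which the equalities of parameters and the $\H$-equivariance of $\tau$ are read off by further applications of $I_m,L_m$ for large $m$. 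You instead first recover $\lambda,\beta,\alpha$ from operator identities ($\lambda$ from $L_m(1\otimes v_0)$, $\beta$ from $I_mI_n=\beta I_{m+n}$, $\alpha$ from $L_mI_n-L_{m'}I_{n'}=\alpha(n-n')I_{m+n}$, or, when $\beta=0$, from the $m$-degree of $L_m\phi(1\otimes v_0)$), then prove that $\phi$ preserves the whole $t$-degree filtration by comparing $m$-degrees of $\lambda^{-m}L_m$, pass to the associated graded to obtain $\psi$ and its $I_m$- and $C_i$-equivariance, and finally supply the $L_m$-equivariance via the $F_d^{(i)}/F_{d-2}^{(i)}$ correction-term identity $\psi(L_mv)-L_m\psi(v)=\lambda^m(\rho_d-\rho_{d+1})(v)$; I verified this identity and the specialization $m\gg0$ does close the argument. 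Your route is self-contained (no appeal to \cite{TZ}) and treats the two hypothesis cases almost uniformly, at the cost of the filtration machinery; the paper's route is shorter because degree-zero preservation of $\psi(1\otimes v)$ accomplishes in one step what your filtration argument establishes for all degrees. One small patch is needed: your recovery of $\beta$, as stated (transporting $I_mI_n=\beta_1 I_{m+n}$ and testing on $1\otimes z$), only yields $\beta_2(\beta_1-\beta_2)=0$ and is vacuous when $\beta_2=0$; in that subcase argue directly that for $n\gg0$ the operator $I_n$ acts by the scalar $\lambda^n\beta_1$ on $1\otimes v$ while it annihilates $\phi(1\otimes v)$, so $\beta_1=0$. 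With that one line added, your proof is complete.
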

\begin{proof}
 The ``if'' part is trivial. Now we prove the ``only if'' part. Let $\psi$ be an isomorphism from $\Omega(\lambda_1,\alpha_1,\beta_1)\otimes \mathrm{Ind}(M_1)$ to   $\Omega(\lambda_2,\alpha_2,\beta_2)\otimes \mathrm{Ind}(M_2)$.

Choose a nonzero element  $1\otimes v\in \Omega(\lambda_1,\alpha_1,\beta_1)\otimes \mathrm{Ind}(M_1)$. Assume
\begin{equation}\label{ass222}\mbox{$\psi(1\otimes v)=\sum\limits_{i=0}^nt^i\otimes v_i,$
 where $v_i\in \mathrm{Ind}(M_2)$ with $v_n\neq0.$}
\end{equation}
There exists  a positive  integer $K=\mathrm{max}\{K(v),K(v_i)\mid i=0,\ldots,n\}$ such that $I_m\cdot v=I_m\cdot v_i=L_m\cdot v=L_m\cdot v_i=0$
for all integers $m\geq K$ and $0\leq i\leq n$.

Now we consider the following   two cases.
\begin{case}
$\beta_2\in\C^*$.
\end{case}
For any
$m_1,m_2\geq K$, it follows from
$(\lambda_1^{-m_1}I_{m_1}-\lambda_1^{-m_2}I_{m_2})\cdot(1\otimes v)=0$
that
\begin{eqnarray}\label{lam4.1}
0&\!\!\!=\!\!\!&\big(\lambda_1^{-m_1}I_{m_1}-\lambda_1^{-m_2}I_{m_2}\big)\cdot \psi(1\otimes v)
\nonumber \\&\!\!\!=\!\!\!&
 \big(\lambda_1^{-m_1}I_{m_1}-\lambda_1^{-m_2}I_{m_2}\big)\cdot \mbox{$\sum\limits_{i=0}^n$}t^i\otimes v_i
\nonumber \\&\!\!\!=\!\!\!&
 \mbox{$\sum\limits_{i=0}^n$}\beta_2\Big(\big(\frac{\lambda_2}{\lambda_1}\big)^{m_1}(t-m_1)^i\otimes v_i -\big(\frac{\lambda_2}{\lambda_1}\big)^{m_2}(t-m_2)^i\otimes v_i\Big).
\end{eqnarray}
In particular, we have
 $$\Big(\big(\frac{\lambda_2}{\lambda_1}\big)^{m_1}-\big(\frac{\lambda_2}{\lambda_1}\big)^{m_2}\Big)(t^n\otimes v_n)=0\quad \mathrm{for\ all}\ m_1,m_2\geq K,$$
which forces $\lambda_1=\lambda_2$.
Whence  \eqref{lam4.1} can be   rewritten as
$$\mbox{$\sum\limits_{i=0}^n$}\Big((t-m_1)^i\otimes v_i -(t-m_2)^i\otimes v_i\Big)=0\quad \mathrm{for\ all}\ m_1,m_2\geq K.$$
Note from the above formula that $n=0$, since otherwise the coefficient $(-1)^n(1\otimes v_n)$ of $m_1^n$  would be zero,  yielding  a contradiction $v_n=0$.
Thus, by \eqref{ass222}  there exists  a linear bijection $\tau: \mathrm{Ind}(M_1)\rightarrow \mathrm{Ind}(M_2)$ such that
$$\psi(1\otimes v)=1\otimes \tau(v)\quad \mathrm{for\ all}\ v\in \mathrm{Ind}(M_1).$$
From  $\lambda_1=\lambda_2$ and $\psi\big(I_m\cdot(1\otimes v)\big)=I_m\cdot\psi(1\otimes v)$ for all  $m\geq K$, it is easy to get
$\beta_1\psi(1\otimes v)=\beta_2(1\otimes \tau(v))$,  which shows $\beta_1=\beta_2$.
Since $\psi\big(I_m\cdot(1\otimes v)\big)=I_m\cdot\psi(1\otimes v)$ for all $m\in\Z$,
 we have
$\psi\big(1\otimes (I_m\cdot v)\big)=1\otimes \big(I_m\cdot \tau(v)\big)$.
Clearly,
\begin{equation}\label{immmm4.3}
\tau(I_m\cdot v)=I_m\cdot \tau(v)\quad \mathrm{for\ all}\ m\in\Z,\,v\in \mathrm{Ind}(M_1).
\end{equation}
 For any
$m_1,m_2\geq K$ and $m_1\neq m_2$, by $$\psi\big((\lambda_1^{-m_1}L_{m_1}-\lambda_1^{-m_2}L_{m_2})\cdot (1\otimes v)\big)=(\lambda_1^{-m_1}L_{m_1}-\lambda_1^{-m_2}L_{m_2})\cdot \psi(1\otimes v),$$  we can deduce that
$(m_2-m_1)\alpha_1\psi(1\otimes v)=(m_2-m_1)\alpha_2(1\otimes \tau(v))$, which shows $\alpha_1=\alpha_2$.
Using $\psi\big(L_m\cdot(1\otimes v)\big)=L_m\cdot\psi(1\otimes v)$ for all $m\geq K$, we can conclude  that
$\psi(t\otimes v)=t\otimes \tau(v)$. Therefore, we get  $\psi\big((L_m\cdot1)\otimes v\big)=(L_m\cdot1)\otimes \tau(v)$ for all $m\in\Z$.
From  $$\psi\big(L_m\cdot(1\otimes v)\big)=L_m\cdot\psi(1\otimes v)\mbox{ for all $m\in\Z$,}$$  we can deduce that
$\psi\big(1\otimes (L_m\cdot v)\big)=1\otimes \big(L_m\cdot \tau(v)\big)$.
Hence,
\begin{equation} \label{eeeqqq4.4}
\tau(L_m\cdot v)=L_m\cdot \tau(v)\quad \mathrm{for\ all}\ m\in\Z,\ v\in \mathrm{Ind}(M_1).
\end{equation}
It is obvious that $\psi\big(C_i\cdot(1\otimes v)\big)=C_i\cdot\psi(1\otimes v)$ for $i=1,2,3,\, v\in \mathrm{Ind}(M_1)$, which implies $\tau(C_i\cdot v)=C_i\cdot \tau(v)$.
This together with \eqref{immmm4.3} and \eqref{eeeqqq4.4} show  that  $\psi$ is an  $\H$-module isomorphism  if $\beta_2\in\C^*$.

\begin{case}
$\alpha_2\in\C^*$.
\end{case}
 By the similar arguments as   in the proof of \cite[Theorem 2]{TZ}, we obtain that $\lambda_1=\lambda_2,\,\alpha_1=\alpha_2$ and
 there exists a linear bijection $\tau:\mathrm{Ind}(M_1)\rightarrow \mathrm{Ind}(M_2)$
such that
$\psi(1\otimes v)=1\otimes \tau(v)$ for all $v\in \mathrm{Ind}(M_1).$  At the same time,   we   get that
$\tau(L_m\cdot v)=L_m\cdot \tau(v)$ for all $m\in\Z,v\in \mathrm{Ind}(M_1).$
Since $\psi\big(I_m\cdot(1\otimes v)\big)=I_m\cdot\psi(1\otimes v)$ for all $m\geq K$, it is easy to see that  $\beta_1=\beta_2$.
Then  from $\psi\big(I_m\cdot(1\otimes v)\big)=I_m\cdot\psi(1\otimes v)$ and $\psi\big(C_i\cdot(1\otimes v)\big)=C_i\cdot\psi(1\otimes v)$ for $i=1,2,3,\, m\in\Z$,
we   conclude that   $\tau(I_m\cdot v)=I_m\cdot \tau(v)$ and $\tau(C_i\cdot v)=C_i\cdot \tau(v)$, respectively.
 Thus, $\mathrm{Ind}(M_1)\cong \mathrm{Ind}(M_2)$ as $\H$-modules for $\alpha_2\in \C^*$.

Combining the above two cases, we have the  isomorphism
criterion. This completes the proof of Theorem \ref{th2}.
\end{proof}


For any $c\in\C$,  denote $\mathcal{I}_{c}$ by the  (maximal) ideal of $\C[L_0]$ generated by $L_0-c$. For an
$\H$-module $V$ and $n\in\Z$, set
$$V_n=V/\mathcal{I}_nV,\ \mathcal{W}(V)=\oplus_{n\in\Z}V_n.$$ 
Then the vector space $\mathcal{W}(V)$ carries the structure of a  weight (with respect to $L_0)$ $\H$-module under the
following given actions (see \cite{N,LZ}):
\begin{eqnarray*}
&L_m\cdot(v+\mathcal{I}_nV)=L_mv+\mathcal{I}_{m+n}V\quad{\rm and}\\&
 I_m\cdot(v+\mathcal{I}_nV)=I_mv+\mathcal{I}_{m+n}V\quad{\rm for}\ m,n\in\Z.
\end{eqnarray*}

\begin{lemm}\label{lemm4.2}
As $\H$-modules,  $\mathcal{W}\big(\Omega(\lambda,\alpha,\beta)\big)\cong A(0,1-\alpha,\beta)$.
\end{lemm}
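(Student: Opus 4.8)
The plan is to make the weight-module structure on $\mathcal{W}\big(\Omega(\lambda,\alpha,\beta)\big)$ completely explicit on a natural basis and then exhibit the isomorphism by hand. First I would observe that on $\Omega(\lambda,\alpha,\beta)=\C[t]$ the operator $L_0$ is nothing but multiplication by $t$, so $\mathcal{I}_nV=(L_0-n)V=(t-n)\C[t]$ and hence each weight space $V_n=\C[t]/(t-n)\C[t]$ is one-dimensional; writing $w_n:=1+\mathcal{I}_nV$ we have $f(t)+\mathcal{I}_nV=f(n)\,w_n$ for every $f(t)\in\C[t]$ (reduction modulo $\mathcal{I}_nV$ is evaluation at $t=n$), and $\{w_n\mid n\in\Z\}$ is a basis of $\mathcal{W}\big(\Omega(\lambda,\alpha,\beta)\big)$.

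Next I would compute the $\H$-action on this basis directly from the definition of $\mathcal{W}(-)$. Since $L_m\cdot 1=\lambda^m(t-m\alpha)$ and $I_m\cdot 1=\lambda^m\beta$ in $\C[t]$, reducing modulo $\mathcal{I}_{m+n}V$ (i.e. evaluating at $t=m+n$) yields
$$L_m\cdot w_n=\lambda^m\big(n+m(1-\alpha)\big)\,w_{m+n},\qquad I_m\cdot w_n=\lambda^m\beta\,w_{m+n},\qquad C_i\cdot w_n=0.$$
Then I would define $\phi:\mathcal{W}\big(\Omega(\lambda,\alpha,\beta)\big)\to A(0,1-\alpha,\beta)$ by $\phi(w_n)=\lambda^{-n}v_n$ (legitimate since $\lambda\in\C^\ast$), which is manifestly a linear bijection, and check that it intertwines the actions: for $L_m$ this amounts to the identity $\lambda^m\big(n+m(1-\alpha)\big)\lambda^{-(m+n)}=\lambda^{-n}\big(n+m(1-\alpha)\big)$, for $I_m$ to $\lambda^m\beta\,\lambda^{-(m+n)}=\lambda^{-n}\beta$, and for $C_i$ both sides vanish. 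This establishes $\mathcal{W}\big(\Omega(\lambda,\alpha,\beta)\big)\cong A(0,1-\alpha,\beta)$.

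There is essentially no hard step here; the only points requiring a little care are the bookkeeping of the scalar $\lambda^{-n}$—which is exactly what absorbs the parameter $\lambda$ and explains its disappearance from $A(0,1-\alpha,\beta)$—and the remark that the intertwining identity for $L_m$ holds as a polynomial identity in $m,n$, so one need not separately treat the case $n+m(1-\alpha)=0$.
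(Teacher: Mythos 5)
Your proof is correct and follows essentially the same route as the paper: compute the one-dimensional weight spaces $\C[t]/(t-n)\C[t]$, evaluate the action of $L_m$, $I_m$ on the classes $1+\mathcal{I}_n$, and absorb the factor $\lambda^{m}$ by a rescaling (the paper sets $w_n=\lambda^n v_n$, you equivalently put the factor $\lambda^{-n}$ into the isomorphism). No gaps.
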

\begin{proof}
By the definition of $\mathcal{I}_n$,  $\mathrm{dim}\big(\Omega(\lambda,\alpha,\beta)/\mathcal{I}_n\Omega(\lambda,\alpha,\beta)\big)=1$
for any $n\in\Z$.
Take $v_n=1+\mathcal{I}_n(\Omega(\lambda,\alpha,\beta))\in\Omega(\lambda,\alpha,\beta)/\mathcal{I}_n(\Omega(\lambda,\alpha,\beta))$.  Then
\begin{eqnarray*}
&&L_mv_n=\lambda^m(t- m\alpha)+\mathcal{I}_{m+n}(\Omega(\lambda,\alpha,\beta))
=\lambda^m(n+m(1-\alpha))v_{m+n},
 \\&&{\rm and}\quad I_mv_n=\lambda^m\beta+\mathcal{I}_{m+n}(\Omega(\lambda,\alpha,\beta))=\lambda^m\beta v_{m+n}\quad {\rm for\ any}\ m,n\in\Z.
\end{eqnarray*}
That is,  $L_mw_n=(n+m(1-\alpha))w_{m+n}$ and $I_mw_n=\beta w_{m+n}$, where $w_{n}=\lambda^nv_n$.  This completes the proof of this lemma.
\end{proof}

\begin{prop}\label{4.3}
We have the following isomorphism of  $\H$-modules
$$\mathcal{W}\Big(\mathcal{M}(V,\Omega(\lambda,\alpha,\beta))\Big)\cong\mathcal{M}\big(V,A(0,1-\alpha,\beta)\big).$$
\end{prop}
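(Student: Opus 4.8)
The plan is to compute the weight spaces of $N:=\mathcal{M}\big(V,\Omega(\lambda,\alpha,\beta)\big)=V\otimes\C[t]$ explicitly and then to match the resulting $\H$-action with that on $\mathcal{M}\big(V,A(0,1-\alpha,\beta)\big)$. The first observation is that $L_0$ acts on $N$ as $\mathrm{id}_V\otimes m_t$, where $m_t$ denotes multiplication by $t$ on $\C[t]$: indeed, putting $m=0$ in \eqref{Lm2.1} annihilates every term $\tfrac{0^{i+1}}{(i+1)!}\bar L_i$ with $0\le i\le r$, so the $\bar L_i$'s do not contribute to $L_0$. Consequently $\mathcal{I}_nN=(L_0-n)N=V\otimes(t-n)\C[t]$, and therefore
$$N_n:=N/\mathcal{I}_nN\;\cong\;V\otimes\big(\C[t]/(t-n)\C[t]\big)\;\cong\;V,$$
the isomorphism being induced by evaluation $u\otimes f(t)\mapsto f(n)\,u$. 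Writing $\overline{u\otimes f(t)}$ for the image of $u\otimes f(t)$ in $N_n$, we conclude that $N_n$ is spanned by the classes $\overline{u\otimes1}$ with $u\in V$, and that $\overline{u\otimes g(t)}=g(n)\,\overline{u\otimes1}$ in $N_n$ for every $g\in\C[t]$.

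Next I would define a linear map $\Phi\colon\mathcal{W}(N)=\bigoplus_{n\in\Z}N_n\longrightarrow\mathcal{M}\big(V,A(0,1-\alpha,\beta)\big)=V\otimes A(0,1-\alpha,\beta)$ on the graded piece $N_n$ by
$$\Phi\big(\overline{u\otimes f(t)}\big)=\lambda^{-n}f(n)\,u\otimes v_n\qquad(u\in V,\ f\in\C[t]),$$
where $\{v_n\mid n\in\Z\}$ is the standard basis of $A(0,1-\alpha,\beta)$. This map is well defined, because $V\otimes(t-n)\C[t]$ is sent to $0$; and it is bijective, being on each graded component the composite of the evaluation isomorphism $N_n\cong V$ with the isomorphism $u\mapsto\lambda^{-n}\,u\otimes v_n$ onto the $n$-th weight space of the target. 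The twist by $\lambda^{-n}$ is inserted exactly as in the proof of Lemma \ref{lemm4.2}, in order to absorb the powers of $\lambda$ occurring in \eqref{Lm2.1}--\eqref{Im2.2}.

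It then remains to check that $\Phi$ intertwines the $\H$-actions, and by linearity it suffices to verify this on the spanning classes $\overline{u\otimes1}\in N_n$. Combining the definition of the $\mathcal{W}$-action with \eqref{Lm2.1}, \eqref{Im2.2} and \eqref{C1232.3}, the reduction $\overline{u\otimes g(t)}=g(m+n)\,\overline{u\otimes1}$ in $N_{m+n}$, and the evaluations $(t-m\alpha)\big|_{t=m+n}=n+m(1-\alpha)$ and $f(t-m)\big|_{t=m+n}=f(n)$, one obtains in $N_{m+n}$ that $C_1,C_2,C_3$ act as $0$ and
$$\begin{aligned}L_m\cdot\overline{u\otimes1}&=\lambda^m\Big(n+m(1-\alpha)+\sum_{i=0}^r\tfrac{m^{i+1}}{(i+1)!}\bar L_i\Big)\overline{u\otimes1},\\ I_m\cdot\overline{u\otimes1}&=\lambda^m\sum_{i=0}^r\tfrac{\beta\,m^{i+d}}{(i+d)!}\bar I_{i+d}\,\overline{u\otimes1}.\end{aligned}$$
Applying $\Phi$ and using $\lambda^m\lambda^{-(m+n)}=\lambda^{-n}$ turns these into exactly the defining formulas of the $\H$-action on $\mathcal{M}\big(V,A(0,1-\alpha,\beta)\big)$, whence $\Phi$ is the desired $\H$-module isomorphism. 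I do not expect a genuine obstacle here: the whole argument is essentially bookkeeping, and the only points requiring a little care are the vanishing of the $\bar L_i$-contribution to $L_0$ (which makes the spaces $\mathcal{I}_nN$ transparent) and the correct placement of the $\lambda^{-n}$ twist in the definition of $\Phi$.
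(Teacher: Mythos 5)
Your proof is correct and follows essentially the same route as the paper: both rest on the observation that $\mathcal{I}_n\big(V\otimes\Omega(\lambda,\alpha,\beta)\big)=V\otimes(t-n)\C[t]$ and on the $\lambda^{-n}$-rescaled identification of each weight space with $V$ (which the paper packages as Lemma \ref{lemm4.2} plus a tensor factorization, while you verify the intertwining directly). Nothing is missing; your explicit check of the $L_m$-, $I_m$- and $C_i$-actions merely spells out what the paper's chain of isomorphisms leaves implicit.
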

\begin{proof}
Using \eqref{Lm2.1}, we have
$$\mathcal{I}_n\Big(\mathcal{M}(V,\Omega(\lambda,\alpha,\beta))\Big)=\mathcal{I}_n\Big(V\otimes\Omega(\lambda,\alpha,\beta)\Big)=V\otimes\mathcal{I}_n(\Omega(\lambda,\alpha,\beta))\quad {\rm for}\ n\in\Z.$$
Then it follows from this and Lemma \ref{lemm4.2} that
\begin{eqnarray*}
&&\mathcal{W}\Big(\mathcal{M}(V,\Omega(\lambda,\alpha,\beta))\Big)=\bigoplus_{n\in\Z}\mathcal{M}(V,\Omega(\lambda,\alpha,\beta))_n
\\&=&\bigoplus_{n\in\Z}\Big(\mathcal{M}(V,\Omega(\lambda,\alpha,\beta))/\mathcal{I}_n\big(\mathcal{M}(V,\Omega(\lambda,\alpha,\beta))\big)\Big)
\\&=&\bigoplus_{n\in\Z}\Big(V\otimes\Omega(\lambda,\alpha,\beta)/V\otimes\mathcal{I}_n\big(\Omega(\lambda,\alpha,\beta)\big)\Big) \quad ({\rm by}\ \eqref{Lm2.1})
\\&\cong&V\otimes\bigoplus_{n\in\Z}\Big(\Omega(\lambda,\alpha,\beta)/\mathcal{I}_n\big(\Omega(\lambda,\alpha,\beta)\big)\Big)
=V\otimes\mathcal{W}\big(\Omega(\lambda,\alpha,\beta)\big)\\
&\cong& V\otimes A(0,1-\alpha,\beta)=\mathcal{M}\big(V,A(0,1-\alpha,\beta)\big).
\end{eqnarray*}\end{proof}
Since $\mathcal W$ is a functor from the category of $\H$-modules to itself (see \cite{N}), an $\H$-module isomorphism between $\mathcal{M}\big(V_1,\Omega(\lambda_1,\alpha_1,\beta_1)\big)$ and $ \mathcal{M}\big(V_2,\Omega(\lambda_2,\alpha_2,\beta_2)\big)$ would imply $d_1=d_2$
and $V_1^{\alpha_1}\cong V_2^{\alpha_2}$ are $\beta_1^{-1}\beta_2$-isomorphic (see Definition \ref{defi-new}) as  $\bar \H_{{\rm max}\{r_1,r_2\},d_1}$-modules by Proposition \ref{4.3} and Theorem \ref{th2.3}. So it is reasonable to include these into  sufficient conditions for  $\mathcal{M}\big(V_1,\Omega(\lambda_1,\alpha_1,\beta_1)\big)$ and $ \mathcal{M}\big(V_2,\Omega(\lambda_2,\alpha_2,\beta_2)\big)$ being isomorphic. In fact,  one  more condition $\lambda_1=\lambda_2$ will be enough, as stated in the following result.

\begin{theo}\label{4.4}
Let $d_i\in\{0,1\}, r_i\in\Z_+, \lambda_i, \beta_i\in\C^*, \alpha_i\in\C$ and $V_i$ be an  irreducible $\bar \H_{r_i,d_i}$-module such that  $\mathcal{M}\big(V_i,\Omega(\lambda_i,\alpha_i,\beta_i)\big)$ is irreducible  for $i=1,2$.
Then   $$\mathcal{M}\big(V_1,\Omega(\lambda_1,\alpha_1,\beta_1)\big)\cong \mathcal{M}\big(V_2,\Omega(\lambda_2,\alpha_2,\beta_2)\big)$$  as $\H$-modules if and only  if
  $d_1=d_2$, $\lambda_1=\lambda_2$
and $V_1^{\alpha_1}\cong V_2^{\alpha_2}$ are $\beta_1^{-1}\beta_2$-isomorphic  as  $\bar \H_{{\rm max}\{r_1,r_2\},d_1}$-modules.
\end{theo}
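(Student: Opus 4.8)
\textbf{Proof proposal for Theorem \ref{4.4}.}
The plan is to prove the ``if'' direction by constructing an explicit isomorphism and to obtain the ``only if'' direction by applying the functor $\mathcal W$ together with Theorem \ref{th2.3}. For the ``only if'' part, suppose $\phi\colon \mathcal{M}\big(V_1,\Omega(\lambda_1,\alpha_1,\beta_1)\big)\to\mathcal{M}\big(V_2,\Omega(\lambda_2,\alpha_2,\beta_2)\big)$ is an $\H$-module isomorphism. Applying the functor $\mathcal W$ and Proposition \ref{4.3}, we get an $\H$-module isomorphism $\mathcal{M}\big(V_1,A(0,1-\alpha_1,\beta_1)\big)\cong\mathcal{M}\big(V_2,A(0,1-\alpha_2,\beta_2)\big)$; since $\beta_i\in\C^*$, Theorem \ref{th2.3} immediately yields $d_1=d_2$ and that $V_1^{\alpha_1}\cong V_2^{\alpha_2}$ are $\beta_1^{-1}\beta_2$-isomorphic as $\bar\H_{\max\{r_1,r_2\},d_1}$-modules (the condition $(1-\alpha_1)-(1-\alpha_2)\in\Z$ coming from that theorem is subsumed, as we shall see $\alpha_1=\alpha_2$ from the next step). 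It remains to extract $\lambda_1=\lambda_2$ from the original $\phi$; here I would mimic the argument in the proof of Theorem \ref{th2} (Case 1): pick $0\ne v\otimes 1$, write $\phi(v\otimes 1)=\sum_i u_i\otimes t^i$, apply operators of the form $\lambda_1^{-m_1}I_{m_1}-\lambda_1^{-m_2}I_{m_2}$ (which annihilate $v\otimes 1$ for $m_1,m_2$ large since $\bar I_{i+d}$-actions feed into the $\Omega$-factor only through powers of $m$ that stabilize) and compare leading coefficients in $m_1$ to force $(\lambda_2/\lambda_1)^{m_1}$ to be constant, hence $\lambda_1=\lambda_2$; incidentally this also collapses the polynomial degree to $0$ and, via the $L_m$-relations, gives $\alpha_1=\alpha_2$.

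For the ``if'' direction, assume $d:=d_1=d_2$, $\lambda:=\lambda_1=\lambda_2$, and let $\psi\colon V_1\to V_2$ be a $\beta_1^{-1}\beta_2$-isomorphism from $V_1^{\alpha_1}$ to $V_2^{\alpha_2}$, i.e. $\psi(\bar L_iv)=\bar L_i\psi(v)$ for $i\ge 1$, $\psi(\bar L_0 v)=(\bar L_0+(\alpha_1-\alpha_2))\psi(v)$, and $\psi(\bar I_{i+d}v)=\beta_1^{-1}\beta_2\,\bar I_{i+d}\psi(v)$ for all $i\ge 0$ (reading these in $\bar\H_{\max\{r_1,r_2\},d}$ and noting $\bar I_{j+d}=0$ for $j>r_i$ on $V_i$). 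Define $\Phi\colon V_1\otimes\C[t]\to V_2\otimes\C[t]$ by $\Phi(v\otimes f(t))=\psi(v)\otimes f(t)$. I would then verify directly that $\Phi$ intertwines the $L_m$- and $I_m$-actions from \eqref{Lm2.1}--\eqref{C1232.3}. For $I_m$ this is immediate: $\Phi\big(I_m(v\otimes f)\big)=\sum_i\tfrac{m^{i+d}}{(i+d)!}\psi(\bar I_{i+d}v)\otimes\lambda^m\beta_1 f(t-m)=\sum_i\tfrac{m^{i+d}}{(i+d)!}\bar I_{i+d}\psi(v)\otimes\lambda^m\beta_2 f(t-m)=I_m\Phi(v\otimes f)$, using $\psi(\bar I_{i+d}v)=\beta_1^{-1}\beta_2\bar I_{i+d}\psi(v)$. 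For $L_m$, the term $v\otimes\lambda^m(t-m\alpha_1)f(t-m)$ and the sum $\sum_{i\ge1}\tfrac{m^{i+1}}{(i+1)!}\bar L_iv\otimes\lambda^mf(t-m)$ are transported correctly by $\psi$, while the $i=0$ term contributes $\tfrac{m^{1}}{1!}\bar L_0 v$; the discrepancy $(\alpha_1-\alpha_2)m$ between $-m\alpha_1$ and $-m\alpha_2$ is exactly absorbed by the shift $\psi(\bar L_0v)=\bar L_0\psi(v)+(\alpha_1-\alpha_2)\psi(v)$, so $\Phi L_m=L_m\Phi$. The central elements act as zero on both sides, so $\Phi$ is an $\H$-module homomorphism, and it is bijective since $\psi$ is; hence it is an isomorphism.

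The main obstacle is the extraction of $\lambda_1=\lambda_2$ and $\alpha_1=\alpha_2$ directly from $\phi$ in the ``only if'' direction, because $\mathcal W$ alone cannot see $\lambda$ (the weight module $A(0,1-\alpha,\beta)$ has lost the scalar $\lambda$). This forces us to argue on $\phi$ itself, and the subtlety is that an element of $\mathcal{M}(V_1,\Omega)$ of the form $v\otimes 1$ need not be mapped to something of the form $u\otimes 1$; one must first run the ``degree-reduction'' argument (comparing coefficients of powers of $m$ in $I_m\phi(v\otimes 1)$) to conclude that after the dust settles $\phi(v\otimes 1)=\psi(v)\otimes 1$ for a linear bijection $\psi$, exactly as in Theorem \ref{th2}. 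Once $\lambda_1=\lambda_2$ and $\alpha_1=\alpha_2$ are in hand and $\mathcal W$ supplies $d_1=d_2$ and the $\beta_1^{-1}\beta_2$-isomorphism of $V_1^{\alpha_1}$ and $V_2^{\alpha_2}$, the proof is complete. I would remark that the irreducibility hypotheses on the $\mathcal{M}\big(V_i,\Omega(\lambda_i,\alpha_i,\beta_i)\big)$ (equivalently, by Theorem \ref{th3.3}, that each $V_i$ is not of the degenerate form $V_{\alpha_i,\delta_{d_i,0}\tau}$) are used to guarantee that $\mathcal W$ applied to an isomorphism remains an isomorphism of the associated irreducible weight modules, so that Theorem \ref{th2.3} applies cleanly.
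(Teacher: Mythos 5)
Your ``if'' direction is correct and is exactly the paper's construction ($\Phi(v\otimes f)=\psi(v)\otimes f$, with the $(\alpha_1-\alpha_2)m$ discrepancy absorbed by the twisted $\bar L_0$-action), and your use of $\mathcal W$, Proposition \ref{4.3} and Theorem \ref{th2.3} to obtain $d_1=d_2$ and the $\beta_1^{-1}\beta_2$-isomorphism $V_1^{\alpha_1}\cong V_2^{\alpha_2}$ also matches the paper. The genuine gap is your extraction of $\lambda_1=\lambda_2$. You propose to mimic Case 1 of Theorem \ref{th2} by applying $\lambda_1^{-m_1}I_{m_1}-\lambda_1^{-m_2}I_{m_2}$ to $v\otimes 1$, asserting that these operators annihilate $v\otimes 1$ for $m_1,m_2$ large because ``the powers of $m$ stabilize''. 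They do not: by \eqref{Im2.2}, $\lambda_1^{-m}I_m(v\otimes 1)=\beta_1\sum_{i=0}^{r_1}\frac{m^{i+d_1}}{(i+d_1)!}\,\bar I_{i+d_1}v\otimes 1$, so the difference equals $\beta_1\sum_{i}\frac{m_1^{i+d_1}-m_2^{i+d_1}}{(i+d_1)!}\,\bar I_{i+d_1}v\otimes 1$, which is nonzero in general (whenever $\bar I_{i+d_1}v\neq 0$ for some $i+d_1\geq 1$; recall from Remark \ref{re2.1}(3) that $\bar I_{r'+d_1}$ even acts bijectively). The vanishing you rely on is a special feature of $\mathrm{Ind}(M)$, where $I_m\cdot v=0$ for $m\gg 0$; it fails in $\mathcal{M}\big(V,\Omega(\lambda,\alpha,\beta)\big)$, where $I_m$ acts on every vector with genuinely $m$-dependent polynomial coefficients. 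A further warning sign is your claimed byproduct $\alpha_1=\alpha_2$: this is not a necessary condition (your own ``if'' direction shows $\mathcal{M}\big(V,\Omega(\lambda,\alpha,\beta)\big)\cong\mathcal{M}\big(V^{c},\Omega(\lambda,\alpha+c,\beta)\big)$ for any scalar shift $c$ of the $\bar L_0$-action), so an argument that yields it must be unsound.

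The paper closes this step inside the weight picture rather than on $\phi$ directly: writing $\phi(w\otimes 1)=\sum_i u_i\otimes t^i$, the identification of Lemma \ref{lemm4.2} (note the rescaling $w_n=\lambda^n v_n$) together with Proposition \ref{4.3} shows that the induced isomorphism $\phi_A$ sends $w\otimes v_n$ to $\sum_i\big(\tfrac{\lambda_2}{\lambda_1}\big)^n n^i\,u_i\otimes v_n$, while by \cite[Theorem 4.1]{CHS16} any isomorphism $\mathcal{M}\big(V_1,A(0,1-\alpha_1,\beta_1)\big)\to\mathcal{M}\big(V_2,A(0,1-\alpha_2,\beta_2)\big)$ has the $n$-independent diagonal form $w\otimes v_n\mapsto\varphi(w)\otimes v_n$; comparing the two expressions for all $n$ forces $\lambda_1=\lambda_2$ and $u_i=0$ for $i\neq 0$ (so $\phi(w\otimes 1)=\varphi(w)\otimes 1$, which is the conclusion you wanted, but obtained without the faulty $I_m$-difference step). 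To repair your route you would have to handle the polynomial $m$-dependence (for instance by suitable finite differences in $m$) and drop the claim $\alpha_1=\alpha_2$; as written, the $\lambda_1=\lambda_2$ step does not go through. (A minor aside: $\mathcal W$ is a functor, so it preserves isomorphisms automatically; the irreducibility hypotheses are not needed for that point.)
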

\begin{proof}
Let $$\phi: \mathcal{M}\big(V_1,\Omega(\lambda_1,\alpha_1,\beta_1)\big)\rightarrow \mathcal{M}\big(V_2,\Omega(\lambda_2,\alpha_2,\beta_2)\big)$$  be an  isomorphism of $\H$-modules. By the remark before this theorem we know that $d_1=d_2$ and  that the linear map $\varphi: V_1^{\alpha_1}\rightarrow V_2^{\alpha_2}$ inducing from $\phi$ is a  $\beta_1^{-1}\beta_2$-isomorphism of $\bar \H_{{\rm max}\{r_1,r_2\},d_1}$-modules.    It remains to show $\lambda_1=\lambda_2$.

Take any $0\neq w\in V_1$  and assume that $\phi(w\otimes 1)=\sum_{i}u_i\otimes t^i\in \mathcal{M}\big(V_2,\Omega(\lambda_2,\alpha_2,\beta_2)\big). $ Note on one hand that   $\phi$ induces an  $\H$-module isomorphism   $$\phi_A: \mathcal{M}\big(V_1,A(0,1-\alpha_1,\beta_1)\big)\rightarrow \mathcal{M}\big(V_2,A(0,1-\alpha_2,\beta_2)\big)$$ sending $w\otimes v_n$ to $\sum_{i}(\frac {\lambda_2}{\lambda_1})^n n^i u_i\otimes v_n$ for any $n\in\Z$  by  Lemma \ref{lemm4.2} and  Proposition \ref{4.3} ,  and on the other hand that $\phi_A(w\otimes v_n)=\varphi(w)\otimes v_n$ for any $n\in\Z$ (see \cite[Theorem 4.1]{CHS16}). Thus, $\varphi(w)=\sum_{i}(\frac {\lambda_2}{\lambda_1})^n n^i u_i,$ which implies $\lambda_1=\lambda_2$ and $u_i=0$ if $i\neq0.$

Conversely, let $\varphi: V_1^{\alpha_1}\rightarrow V_2^{\alpha_2}$ be a $\beta_1^{-1}\beta_2$-isomorphism of  $\bar \H_{{\rm max}\{r_1,r_2\},d_1}$-modules.  One can check the linear map $\phi: \mathcal{M}\big(V_1,\Omega(\lambda,\alpha_1,\beta_1)\big)\rightarrow \mathcal{M}\big(V_2,\Omega(\lambda,\alpha_2,\beta_2)\big)$ sending $v\otimes f(t)$ to $\varphi(v)\otimes f(t)$ is an isomorphism of $\H$-modules.
\end{proof}


\section{New irreducible modules}  
In this section, we shall  show that any one of $\Omega(\lambda,\alpha,\beta)\otimes \mathrm{Ind}(M)$ and  $\mathcal{M}\big(V,\Omega(\lambda,\alpha,\beta)\big)$ is not isomorphic to  $\mathrm{Ind}(M)$ or
the  irreducible non-weight $\H$-modules defined in \cite{CHS16} and   that
$\Omega(\lambda,\alpha,\beta)\otimes \mathrm{Ind}(M)$  is not isomorphic to
  $\mathcal{M}\big(V,\Omega(\lambda^\prime,\alpha^\prime,\beta^\prime)\big)$.




 For any $l,m\in\Z$, $s\in\Z_+$,  define a sequence of  operators $T_{l,m}^{(s)}$ as follows
\begin{equation*}
T_{l,m}^{(s)}=\sum_{i=0}^s(-1)^{s-i} \binom{s}{i}I_{l-m-i}I_{m+i}.
\end{equation*}
\begin{lemm}\label{5.1}
Let $ \lambda\in\C^\ast, \alpha,\beta\in\C$ and  $M$ be an irreducible $\H_e$-module satisfying the conditions in Theorem $\ref{th2.1}$ and $V$   an irreducible $\bar \H_{r,d}$-module. Assume that $r^\prime$ is the maximal nonnegative integer such that $\bar I_{r^\prime+d}V\neq0$.  Then
\begin{itemize}\lineskip0pt\parskip-1pt
\item[\rm (i)] the action of $L_m$ for $m$ sufficiently large is not locally nilpotent on $\Omega(\lambda,\alpha,\beta)\otimes\mathrm{Ind}(M);$
\item[{\rm (ii)}] the action of $L_m$ for each $m\in\Z$ on  $\mathcal{M}\big(V,\Omega(\lambda,\alpha,\beta)\big)$ is not locally
  nilpotent$;$

\item[{\rm (iii)}] $T_{l,m}^{(2r^\prime+2d)}$ is a linear isomorphism of $\mathcal{M}\big(V,\Omega(\lambda,\alpha,\beta)\big)$ and $\widetilde{\mathcal M}(V,\gamma(t))$  for   $l,m\in\Z$ and $\gamma(t)\in\C[t,t^{-1}];$

\item[\rm (iv)]    $T_{l,m}^{(s)}$ $(s\geq1)$ is locally nilpotent on $\Omega(\lambda,\alpha,\beta)\otimes\mathrm{Ind}(M)$  whenever $m\gg 0$ and $l\gg m;$
\item[\rm (v)] $T_{l,m}^{(1)}$ acts nontrivially on $\Omega(\lambda,\alpha,\beta)\otimes\mathrm{Ind}(M)$ whenever  $m\ll 0$ and $l\ll m$.
 \end{itemize}
\end{lemm}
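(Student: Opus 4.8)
The plan is to treat the five items essentially independently, since each concerns a different structural feature of the two module families, but a common computational engine underlies all of them: the explicit formulas \eqref{lmj3.1} for the action on $\Omega(\lambda,\alpha,\beta)$ together with the local nilpotency/local finiteness of the $I_m,L_m$ action on $\mathrm{Ind}(M)$ coming from Theorem \ref{th2.1}, and the injectivity of $\bar I_{r^\prime+d}$ from Remark \ref{re2.1}(3). For (i), I would exhibit a single vector on which $L_m$ ($m$ large) acts without ever returning zero: take $1\otimes v$ for $0\neq v\in\mathrm{Ind}(M)$ killed by all $L_k,I_k$ with $k\geq m$; then by \eqref{Om1} the action is $L_m^N(1\otimes v)=\lambda^{mN}\prod_{j=1}^{N}(t-jm-m\alpha)\otimes v$, a nonzero polynomial tensor $v$, so no power annihilates it. For (ii), the same idea works on $\mathcal{M}\big(V,\Omega(\lambda,\alpha,\beta)\big)$: by \eqref{Lm2.1}, $L_m(v\otimes 1)=v\otimes\lambda^m(t-m\alpha)+(\text{lower in }t\text{ sense})$, and iterating, $L_m^N(v\otimes 1)$ has top-degree-in-$t$ term $v\otimes\lambda^{mN}t^N$ (up to a nonzero scalar), which is nonzero for every $N$ as long as $v\neq 0$; one must check that the correction terms involving $\bar L_i v$ cannot cancel the leading term, which holds on degree grounds in $t$.

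For (iv) and (v), the key computation is to understand $T_{l,m}^{(s)}$ acting through the $I$-action. On $\Omega(\lambda,\alpha,\beta)$ alone, \eqref{lmj3.1} gives $I_aI_b J_n^k=\lambda^{a+b}\beta^2 J_{a+b+n}^k$, independent of the splitting of $a+b$; hence the finite-difference combination $\sum_{i=0}^s(-1)^{s-i}\binom{s}{i}$ acting on the scalar part vanishes for $s\geq 1$, so $T_{l,m}^{(s)}$ on $\Omega\otimes\mathrm{Ind}(M)$ reduces to $\beta^2\lambda^{l}$ times an operator built purely from $\sum_i(-1)^{s-i}\binom{s}{i}I_{l-m-i}I_{m+i}$ acting on the $\mathrm{Ind}(M)$ factor (with a fixed $\Omega$-shift $J^k_\bullet\mapsto J^k_{l+\bullet}$). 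For (iv), when $m\gg 0$ and $l\gg m$ all indices $l-m-i$ and $m+i$ are large positive, so each $I_{m+i}$ already kills any fixed $v\in\mathrm{Ind}(M)$ after finitely many applications (Theorem \ref{th2.1}(ii)), making $T_{l,m}^{(s)}$ locally nilpotent; this requires a short bookkeeping argument that a product of finitely many locally nilpotent commuting-up-to-lower-order operators stays locally nilpotent on the finitely generated piece in question. For (v), when $m\ll 0$ and $l\ll m$ the indices become large negative, where $I_a$ acts (essentially) freely on $\mathrm{Ind}(M)=\mathcal U(\H)\otimes_{\mathcal U(\H_e)}M$, and I would compute $T_{l,m}^{(1)}=I_{l-m-1}I_{m+1}-I_{l-m}I_m$ on a PBW-lowest term of $\mathrm{Ind}(M)$ and read off a nonzero leading coefficient; the finite-difference cancellation that killed things in (iv) fails here precisely because the two products land on genuinely different PBW monomials when the indices are deeply negative.

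For (iii), I would first record that $T_{l,m}^{(2r^\prime+2d)}$ acting on $\mathcal{M}\big(V,\Omega(\lambda,\alpha,\beta)\big)$ picks out, by exactly the variable-$k$ argument used in the proof of Theorem \ref{th3.3} (coefficient of the top power of the "$i$-index"), a nonzero multiple of $\big(\bar I_{r^\prime+d}^{\,2}\otimes\text{id}\big)$ composed with the $\Omega$-shift $\gamma(t)\cdot$ for an appropriate Laurent polynomial $\gamma(t)\in\C[t,t^{-1}]$ (coming from the $\lambda$-powers and the $(t-j)$-factors); since $\bar I_{r^\prime+d}$ is a linear isomorphism of $V$ and multiplication by a nonzero $\gamma(t)$ is a linear isomorphism of $\C[t,t^{-1}]$ (and on $\C[t]$ one passes to the natural completion $\widetilde{\mathcal M}(V,\gamma(t))$ as signaled by the notation), $T_{l,m}^{(2r^\prime+2d)}$ is a linear isomorphism there. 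I expect the main obstacle to be item (v): verifying that $T_{l,m}^{(1)}$ genuinely acts nontrivially requires a careful PBW-basis analysis of $\mathrm{Ind}(M)$ in the regime of very negative indices to be sure the two terms $I_{l-m-1}I_{m+1}$ and $I_{l-m}I_m$ do not conspire to cancel on the chosen test vector — everything else is a matter of organizing the finite-difference identities and invoking the already-established injectivity and local-nilpotency facts.
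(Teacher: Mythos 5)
Your items (i), (ii) and (v) are essentially correct and in line with the paper's route: (i) is exactly the observation that $L_m$ kills a suitable $v\in\mathrm{Ind}(M)$ while acting freely on $\Omega(\lambda,\alpha,\beta)$, (v) is the same explicit expansion of $T^{(1)}_{l,m}(1\otimes(1\otimes v))$ whose quadratic part $I_{l-m-1}I_{m+1}-I_{l-m}I_m$ involves generators outside $\H_e$ and hence survives in the PBW basis (the paper does this computation directly; it, like you, should exclude the degenerate value $l=2m+1$), and for (ii)--(iii) the paper simply cites \cite{CHS16}, so your direct degree-in-$t$ argument for (ii) is a fine substitute. Two steps, however, are genuinely flawed. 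In (iv) you claim that, since $T^{(s)}_{l,m}$ kills $\Omega(\lambda,\alpha,\beta)$, its action on the tensor product reduces to $\beta^2\lambda^{l}$ times the same operator acting purely on the $\mathrm{Ind}(M)$ factor with one fixed shift on the $\Omega$ side. That is false: $I_a$ acts on $\Omega(\lambda,\alpha,\beta)\otimes\mathrm{Ind}(M)$ as $I_a\otimes 1+1\otimes I_a$, so $T^{(s)}_{l,m}(f\otimes v)$ contains cross terms such as $\lambda^{l-m-i}\beta f(t-l+m+i)\otimes I_{m+i}v$ and $\lambda^{m+i}\beta f(t-m-i)\otimes I_{l-m-i}v$, which neither vanish nor carry a common shift. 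The conclusion still holds, but only via the bookkeeping you merely gesture at: every surviving term applies at least one $I_n$ with $n\in\{m+i,\ l-m-i\}$ (all of index $>k$ when $m\gg0$, $l\gg m$) to the $\mathrm{Ind}(M)$ factor; these finitely many operators commute (as $c_3=0$) and are locally nilpotent by Theorem \ref{th2.1}(ii), so sufficiently long words in them annihilate any fixed vector, whence $(T^{(s)}_{l,m})^{p}$ kills it for $p$ large.

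The real gap is in (iii). First, $\widetilde{\mathcal M}(V,\gamma(t))$ is not a ``natural completion'' of $\mathcal M\big(V,\Omega(\lambda,\alpha,\beta)\big)$; it is the separate module $V\otimes\C[t,t^{-1}]$ with the twisted action defined in Section 5, and item (iii) asserts that $T^{(2r^\prime+2d)}_{l,m}$ is a bijective linear operator on each of the two modules. You give no argument at all for $\widetilde{\mathcal M}(V,\gamma(t))$ (note $I_m\circ{}=I_m$ there, so the same finite-difference computation applies, which is why the paper can quote \cite{CHS16} for both (ii) and (iii)). Second, your mechanism for bijectivity on $\mathcal M\big(V,\Omega(\lambda,\alpha,\beta)\big)$ is misstated: carrying out the variable-in-$i$ argument, the only surviving term of $T^{(2r^\prime+2d)}_{l,m}(v\otimes f(t))$ is a nonzero multiple of $\beta^{2}\lambda^{l}\,\bar I_{r^\prime+d}^{\,2}v\otimes f(t-l)$, i.e.\ $\bar I_{r^\prime+d}^{\,2}$ (bijective by Remark \ref{re2.1}(3)) composed with the substitution $f(t)\mapsto f(t-l)$, not with multiplication by a Laurent polynomial $\gamma(t)$ coming from ``$(t-j)$-factors'' (no such factors occur, since $T$ is built from the $I$'s only). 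Moreover, the principle you invoke is itself wrong: multiplication by a nonzero $\gamma(t)$ is injective but not surjective on $\C[t,t^{-1}]$ unless $\gamma$ is a monomial, so that step would fail even on its own terms. (Your computation also visibly requires $\beta\neq0$, a hypothesis worth making explicit.)
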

\begin{proof} (i) follows from the local nilpotency  of $L_m$ for $m$ sufficiently large  on $\mathrm{Ind}(M)$ by Theorem \ref{th2.1} and its  non-local nilpotency on $\Omega(\lambda,\alpha,\beta)$.
 (ii) and
  {\rm (iii)}  follows from   \cite[Lemma 3.3]{CHS16}.  (iv) follows from an easy observation  that $$T_{l,m}^{(s)}\Omega(\lambda,\alpha,\beta)=0 \quad {\rm for}\ l,m\in\Z$$ and $T_{l,m}^{(s)}$ is locally nilpotent on ${\rm Ind}(M)$ when $m\gg 0$ and $l\gg m.$ Note  when $m\ll 0$ and $l\ll m$ that  $I_{l-m}\notin \H_e$ and $I_{m}\notin\H_e$. It follows from this and   a direct computation that for  $0\neq 1\otimes 1\otimes v\in \Omega(\lambda,\alpha,\beta)\otimes {\rm Ind}(M)$ we have
\begin{eqnarray*}
T_{l,m}^{(1)}(1\otimes 1\otimes v)&=&1\otimes \big(-\lambda^m \beta I_{l-m}-\lambda^{l-m}\beta I_m-I_{l-m}I_m +\\
&&\lambda^{m+1}\beta I_{l-m-1}+\lambda^{l-m-1}\beta I_{m+1}+I_{l-m-1}I_{m+1}\big)\otimes v\neq0.
\end{eqnarray*}So $T_{l,m}^{(1)}\big(\Omega(\lambda,\alpha,\beta)\otimes\mathrm{Ind}(M)\big)\neq0$, proving (v).
\end{proof}

 Let $d\in\{0,1\}, r\in\Z_+$ and $V$  be an $\bar\H_{r,d}$-module.
For any $\gamma(t)=\sum_{i}c_it^i\in\C[t,t^{-1}]$,
define the  action of $\H$ on $V\otimes \C[t,t^{-1}]$ as follows
\begin{eqnarray*}\label{l6.1}
&&L_m\circ (v\otimes t^n)=(L_m+\sum_{i}c_iI_{m+i})(v\otimes t^n),
\\&&
I_m\circ (v\otimes t^n)=I_m(v\otimes t^n),\\
&&C_i\circ(v\otimes t^n)=0\quad {\rm for}\ m,n\in\Z, v\in V\ {\rm and}\ i=1,2,3.
 \end{eqnarray*}
Then $V\otimes \C[t,t^{-1}]$ carries the structure of an $\H$-module under the  above given actions, which is denoted by $\widetilde{\mathcal M}(V,\gamma(t))$. Note that  $\widetilde{\mathcal M}(V,\gamma(t))$ is a weight $\H$-module if and only if $\gamma(t)\in \C$ and also that
the $\H$-module $\widetilde{\mathcal M}(V,\gamma(t))$ for $\gamma(t)\in\C[t,t^{-1}]$ is  irreducible if and only if $V$ is irreducible (see \cite{CHS16}).

We are now ready to state the main result of this section.
\begin{prop}
Let $d\in\{0,1\}$, $r,e\in\Z_+,$  $\lambda\in\C^*$, $\alpha,\beta\in\C$, $M$ be an irreducible $\H_e$-module satisfying the conditions in  Theorem $\ref{th2.1}$ and  $V$  an irreducible $\bar \H_{r,d}$-module. Then
any of  $\Omega(\lambda,\alpha,\beta)\otimes \mathrm{Ind}(M)$ and $\mathcal{M}\big(V,\Omega(\lambda,\alpha,\beta)\big)$
is not isomorphic to  $\mathrm{Ind}(M^\prime)$ for any irreducible $\H_e$-module $M^\prime$ satisfying the conditions in Theorem $\ref{th2.1}$ or $\Omega(\lambda^\prime,\alpha^\prime,\beta^\prime)$ for any $\lambda^\prime\in\C^*$, $\alpha^\prime,\beta^\prime\in\C$,   or $\widetilde{\mathcal M}(W,\gamma(t))$ for any $\bar\H_{r,d}$-module $W$ and $\gamma(t)=\sum_{i}c_it^i\in\C[t,t^{-1}]$; and $\Omega(\lambda,\alpha,\beta)\otimes \mathrm{Ind}(M)$ is not isomorphic to $\mathcal{M}\big(W,\Omega(\lambda^\prime,\alpha^\prime,\beta^\prime)\big)$
\end{prop}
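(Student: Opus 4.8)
The plan is to distinguish $\Omega(\lambda,\alpha,\beta)\otimes\mathrm{Ind}(M)$ and $\mathcal M\big(V,\Omega(\lambda,\alpha,\beta)\big)$ from each listed competitor by exhibiting, in every case, an $\H$-module invariant on which the two modules disagree. Four invariants are available: the set of $m$ for which $L_m$ acts locally nilpotently; for $s\ge1$, whether $T_{l,m}^{(s)}$ acts as zero, locally nilpotently, or bijectively (for prescribed or for generic $l,m$); whether the module is a weight module, equivalently whether $L_0$ acts locally finitely; and the isomorphism type of the weight module $\mathcal W(\cdot)$, where Lemma~\ref{lemm4.2}, Proposition~\ref{4.3} and the rigidity of Theorem~\ref{th2.3} are at our disposal. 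All four are preserved under $\H$-module isomorphism. Throughout, write $r'$ (resp.\ $r''$) for the largest integer with $\bar I_{r'+d}V\neq0$ (resp.\ $\bar I_{r''+d}W\neq0$); this exists by Remark~\ref{re2.1}(3), for if it did not then $I_m$ would annihilate $\widetilde{\mathcal M}(W,\gamma(t))$ while $I_m$ is nonzero on both modules we study, immediately giving the non-isomorphism. We also take $V$ (and $W$) infinite dimensional and $\beta\neq0$, since a one-dimensional $V$ makes $\mathcal M\big(V,\Omega(\lambda,\alpha,\beta)\big)$ an $\Omega$-module by Remark~\ref{re2.1}(2) and $\beta=0$ makes it a Virasoro module.

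The separation from any $\mathrm{Ind}(M')$ uses the first invariant: $L_m$ is locally nilpotent on $\mathrm{Ind}(M')$ for all large $m$ by Theorem~\ref{th2.1}(ii), but on $\Omega(\lambda,\alpha,\beta)\otimes\mathrm{Ind}(M)$ it is not locally nilpotent for large $m$ by Lemma~\ref{5.1}(i), and on $\mathcal M\big(V,\Omega(\lambda,\alpha,\beta)\big)$ it is not locally nilpotent for any $m$ by Lemma~\ref{5.1}(ii). The separation of $\Omega(\lambda,\alpha,\beta)\otimes\mathrm{Ind}(M)$ from $\Omega(\lambda',\alpha',\beta')$ uses the second: for $s\ge1$ the operator $T_{l,m}^{(s)}$ kills every $\Omega$-module (this vanishing is recorded in the proof of Lemma~\ref{5.1}(iv)), whereas $T_{l,m}^{(1)}$ acts nontrivially on $\Omega(\lambda,\alpha,\beta)\otimes\mathrm{Ind}(M)$ for $m\ll0$, $l\ll m$ by Lemma~\ref{5.1}(v). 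The same invariant separates $\Omega(\lambda,\alpha,\beta)\otimes\mathrm{Ind}(M)$ from $\widetilde{\mathcal M}(W,\gamma(t))$ and from $\mathcal M\big(W,\Omega(\lambda',\alpha',\beta')\big)$ as long as $2r''+2d\ge1$: then $T_{l,m}^{(2r''+2d)}$ is bijective on those two modules for all $l,m$ by Lemma~\ref{5.1}(iii), but locally nilpotent on $\Omega(\lambda,\alpha,\beta)\otimes\mathrm{Ind}(M)$ for $l\gg m\gg0$ by Lemma~\ref{5.1}(iv), and a locally nilpotent operator on a nonzero module cannot be injective.

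For the remaining comparisons involving $\mathcal M\big(V,\Omega(\lambda,\alpha,\beta)\big)$ I would pass to invariants (3) and (4). Since $L_0$ acts on $V\otimes\C[t]$ as $\mathrm{id}_V\otimes(\text{multiplication by }t)$, it is not locally finite, so $\mathcal M\big(V,\Omega(\lambda,\alpha,\beta)\big)$ is non-weight and in particular not isomorphic to $\widetilde{\mathcal M}(W,\gamma(t))$ when $\gamma(t)\in\C$. To see $\mathcal M\big(V,\Omega(\lambda,\alpha,\beta)\big)\not\cong\Omega(\lambda',\alpha',\beta')$, apply $\mathcal W$: by Proposition~\ref{4.3} and Lemma~\ref{lemm4.2} the left side becomes $\mathcal M\big(V,A(0,1-\alpha,\beta)\big)$, whose integral $L_0$-weight spaces all have dimension $\dim V>1$, whereas $\mathcal W\big(\Omega(\lambda',\alpha',\beta')\big)\cong A(0,1-\alpha',\beta')$ has one-dimensional weight spaces, and $\mathcal W$ is a functor. (No claim of the proposition asserts $\mathcal M\big(V,\Omega(\lambda,\alpha,\beta)\big)\not\cong\mathcal M\big(W,\Omega(\lambda',\alpha',\beta')\big)$, which would contradict Theorem~\ref{4.4}.)

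Two delicate points remain, and I expect the first of them to be the crux. First, when $\gamma(t)\notin\C$ one still has to prove $\mathcal M\big(V,\Omega(\lambda,\alpha,\beta)\big)\not\cong\widetilde{\mathcal M}(W,\gamma(t))$; here all of invariants (1)--(4) are too coarse, since both modules carry a bijective high-order $T_{l,m}$, both are non-weight, and both have $\mathcal W$-image a weight module of the form $\mathcal M(\cdot,A(\cdot))$. The distinguishing feature should be that $\mathcal M\big(V,\Omega(\lambda,\alpha,\beta)\big)$ is modelled on the polynomial ring $\C[t]$: the abelian subalgebra $\langle I_m\mid m\in\Z\rangle$ leaves invariant the exhausting increasing chain $V\otimes\C\subsetneq V\otimes(\C+\C t)\subsetneq\cdots$ of proper subspaces, whereas on $\widetilde{\mathcal M}(W,\gamma(t))$, built on $\C[t,t^{-1}]$, the $I_m$ act through honest shifts of $t$ and permit no such chain; turning this contrast into an isomorphism invariant is the main obstacle. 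Second, in the degenerate case $r''=d=0$ (and likewise $r'=d=0$) the operator $T_{l,m}^{(2r''+2d)}$ collapses to $I_{l-m}I_m$, which is bijective on both sides and hence useless; here one instead checks by direct computation that $T_{l,m}^{(1)}$ acts as zero on $\mathcal M\big(W,\Omega(\lambda',\alpha',\beta')\big)$ and on $\widetilde{\mathcal M}(W,\gamma(t))$---only $\bar I_0$ enters the $I_m$-action and the shifts in consecutive products of $I$'s cancel---while $T_{l,m}^{(1)}$ is still nontrivial on $\Omega(\lambda,\alpha,\beta)\otimes\mathrm{Ind}(M)$ by Lemma~\ref{5.1}(v), which closes the remaining cases.
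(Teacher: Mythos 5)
Your treatment of most cases coincides with the paper's own proof: Lemma \ref{5.1}(i),(ii) together with Theorem \ref{th2.1}(ii) against $\mathrm{Ind}(M^\prime)$; Lemma \ref{5.1}(v) together with $T^{(1)}_{l,m}\Omega(\lambda^\prime,\alpha^\prime,\beta^\prime)=0$ against $\Omega(\lambda^\prime,\alpha^\prime,\beta^\prime)$; and Lemma \ref{5.1}(iii)--(iv) against $\widetilde{\mathcal M}(W,\gamma(t))$ and $\mathcal{M}\big(W,\Omega(\lambda^\prime,\alpha^\prime,\beta^\prime)\big)$. Your patch for the degenerate case $r^{\prime\prime}+d=0$ (where $T^{(0)}_{l,m}=I_{l-m}I_m$ is useless but $T^{(1)}_{l,m}$ annihilates both competitors, since only $\bar I_0$ enters their $I$-action, while it is nonzero on the tensor module by Lemma \ref{5.1}(v)) is correct and in fact covers a point the paper passes over in silence, as does your $\mathcal W$-functor argument for $\mathcal{M}\big(V,\Omega(\lambda,\alpha,\beta)\big)\ncong\Omega(\lambda^\prime,\alpha^\prime,\beta^\prime)$. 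The genuine gap is the one you flag yourself: the comparison of $\mathcal{M}\big(V,\Omega(\lambda,\alpha,\beta)\big)$ with $\widetilde{\mathcal M}(W,\gamma(t))$ for $\gamma(t)\notin\C$ is never proved; the ``exhausting chain of $I$-invariant subspaces'' idea is left as a heuristic and is not converted into an isomorphism invariant, so the proposal is incomplete exactly at the case you call the crux.

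The paper closes that case with a single element of $\mathcal U(\H)$, namely $X=L_0-\sum_i c_iI_i$ (a finite sum since $\gamma(t)\in\C[t,t^{-1}]$). By the definition of the twisted action, $X$ acts on $W\otimes t^n\subseteq\widetilde{\mathcal M}(W,\gamma(t))$ as the scalar $n$, so $X$ is semisimple there. On $\mathcal{M}\big(V,\Omega(\lambda,\alpha,\beta)\big)$, however, $X$ has no eigenvector at all: by \eqref{Lm2.1} with $m=0$ one has $L_0\big(v\otimes f(t)\big)=v\otimes tf(t)$ (every $\bar L_i$-term carries the factor $0^{i+1}=0$), so $L_0$ raises the top $t$-degree by one, while each $I_i$ in \eqref{Im2.2} does not raise it; hence for $u=\sum_k v_k\otimes t^k$ with top term $v_{k_0}\otimes t^{k_0}\neq0$, the element $Xu$ has the nonzero degree-$(k_0+1)$ component $v_{k_0}\otimes t^{k_0+1}$, and $Xu=\mu u$ is impossible. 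Since the existence of eigenvectors for a fixed element of $\mathcal U(\H)$ is preserved by $\H$-module isomorphisms, this settles the case uniformly in $\gamma(t)$ and subsumes your separate weight/non-weight discussion for constant $\gamma$. Adding this one argument would make your proof complete.
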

\begin{proof}  $\Omega(\lambda,\alpha,\beta)\otimes \mathrm{Ind}(M)\ncong \mathrm{Ind}(M^\prime)$ follows from Lemma \ref{5.1}(i) and Theorem \ref{th2.1};    $\mathcal{M}\big($ $V,$ $\Omega(\lambda,\alpha,\beta)\big)\ncong \mathrm{Ind}(M^\prime)$ follows from Lemma \ref{5.1}(ii)  and Theorem \ref{th2.1};   $\Omega(\lambda,\alpha,\beta)\otimes\mathrm{Ind}(M)\ncong\mathcal{M}\big(V,\Omega(\lambda^\prime,\alpha^\prime,\beta^\prime)\big) $ and $\Omega(\lambda,\alpha,\beta)\otimes \mathrm{Ind}(M)\ncong \widetilde{\mathcal M}(W,\gamma(t))$ follows  from Lemma \ref{5.1}(iii) and (iv); $\Omega(\lambda,\alpha,\beta)\otimes \mathrm{Ind}(M)\ncong \Omega(\lambda^\prime,\alpha^\prime,\beta^\prime)$ follows from Lemma \ref{5.1}(v) and the fact $T_{l,m}^{(1)}\Omega(\lambda,\alpha,\beta)=0$  for $l,m\in\Z$. Finally,  on the one hand, note that   the restriction of $L_0-\sum_{i}c_iI_{i}$ on ${W\otimes t^n}$ is the scalar $n$, namely, $L_0-\sum_{i}c_iI_{i}$  is semisimple on $\widetilde{\mathcal M}(W,\gamma(t));$
on the other hand,   $L_0-\sum_{i}c_iI_{i}$ has no eigenvector in $\mathcal{M}\big(V,\Omega(\lambda,\alpha,\beta)\big)$.
Thus,  $\mathcal{M}\big(V,\Omega(\lambda,\alpha,\beta)\big)\ncong\widetilde{\mathcal M}\big(W,\gamma(t)\big)$.
\end{proof}

\section{Applications of  $\Omega(\lambda,\alpha,\beta)\otimes\mathrm{Ind}(M)$}\label{se5}

Inspired by \cite{MW,TZ}, we  construct  two classes of non-weight modules and then apply Theorem \ref{th1} to give
certain conditions   for these   modules being irreducible.

For   $\lambda\in\C^*$, we denote  $\H_\lambda^{(0)}=\mathrm{span}_{\C}\{L_m-\lambda^{m}L_0,I_0,I_m\mid m\geq1\}$ and $\H_\lambda^{(1)}=\mathrm{span}_{\C}\{L_m-\lambda^{m-1}L_1,I_n\mid m\geq2,n\geq1\}.$
It is easy to check that both $\H_\lambda^{(0)}$ and $\H_\lambda^{(1)}$  are Lie subalgebras of $\H$.
For a fixed    $RS=(r_1,r_2,s_0,s_1)\in\C^4$ and $PQ=(p_2,p_3,p_4,q_1,q_2)\in\C^5$, we  define  an $\H_\lambda^{(0)}$-action on $\C$ by
 \begin{equation}\label{LI5.2}
\aligned
(L_m-\lambda^{m}L_0)\cdot 1&=r_m\quad \mbox{for}\ m=1,2;
 \\
  (L_m-\lambda^{m}L_0)\cdot 1&=\lambda^{m-2}(m-1)r_2-\lambda^{m-1}(m-2)r_1\quad \mbox{for}\ m>2;
  \\
  I_m\cdot 1&=s_m \quad \mbox{for}\ m=0,1;
  \\
  I_m\cdot 1&=\lambda^{m-1}s_1 \quad \mbox{for}\ m>1
\endaligned
\end{equation} and an $\H_\lambda^{(1)}$-action on $\C$ by
 \begin{equation}\label{LII5.6}
\aligned
 (L_m-\lambda^{m-1}L_1)\cdot 1&=p_m\quad \mbox{for}\ m=2,3,4;
 \\
  (L_m-\lambda^{m-1}L_1)\cdot 1&=\lambda^{m-4}(m-3)p_4-\lambda^{m-3}(m-4)p_3\quad \mbox{for}\ m>4;
  \\
  I_m\cdot 1&=q_m \quad \mbox{for}\ m=1,2;
  \\
  I_m\cdot 1&=\lambda^{m-2}q_2 \quad \mbox{for}\ m>2.
\endaligned
\end{equation}
It is straightforward to verify that under the given actions $\C$ is an $\H_\lambda^{(0)}$-module and also an $\H_\lambda^{(1)}$-module, denoted by $\C_{RS}$ and $\C_{PQ}$, respectively.
For a  fixed $\underline y=(y_1,y_2,y_3)\in\C^3$ and $\underline z=(z_0,z_1,z_2,z_3)\in\C^4$, we form  the  modules $\mathrm{Ind}_{\underline y,\lambda}(\C_{RS})$  and $\mathrm{Ind}_{\underline z,\lambda}(\C_{PQ})$:
  \begin{eqnarray}\label{ind5.4}
  \mathrm{Ind}_{\underline y,\lambda}(\C_{RS})
  =\mathcal{U}(\H)\otimes_{\mathcal{U}(\H_\lambda^{(0)})}\C_{RS}\Big/\,\mbox{$\sum\limits_{i=1}^{3}$}(C_i-y_i)\mathcal{U}(\H)\otimes_{\mathcal{U}(\H_\lambda^{(0)})}\C_{RS},\\
  \label{ind5.1}
  \mathrm{Ind}_{\underline z,\lambda}(\C_{PQ})
  =\mathcal{U}(\H)\otimes_{\mathcal{U}(\H_\lambda^{(1)})}\C_{PQ}\Big/\,\mbox{$\sum\limits_{i=0}^{3}$}(C_i-z_i)\mathcal{U}(\H)\otimes_{\mathcal{U}(\H_\lambda^{(1)})}\C_{PQ}.
  \end{eqnarray}

    The following  lemma is the key to proving the main results of this section, which generalizes \cite[Lemma 6]{TZ}.
  \begin{lemm}\label{lemm5.1}
Let   $V$ be  a cyclic $\H$-module with a basis
  $$\Big\{I_{i-m}^{k_{i-m}}\cdots I_{i}^{k_{i}}L_{j-n}^{l_{j-n}}\cdots L_{j}^{l_{j}}\cdot v\,\Big|\,
   m,n,k_i,\ldots,k_{i-m},l_j,\ldots,l_{j-n}\in\Z_+\Big\},$$
where 
$0\neq v\in V$ is a  fixed vector, $i,j$ are fixed integers and $I_p\cdot v\in\C v,$ $L_q\cdot v\in\C v$  for
all integers $p>i,\,q>j$.
Then for  $(\lambda,\alpha)\in(\C^*)^2$ or  $(\lambda,\beta)\in(\C^*)^2$,  $\Omega(\lambda,\alpha,\beta)\otimes V$ is also a
cyclic $\H$-module with a generator $1\otimes v$  and  a basis
\begin{equation*}\label{ba5.2}
\mathcal B=\{I_{i-m}^{k_{i-m}}\cdots I_{i}^{k_{i}}L_{j-n}^{l_{j-n}}\cdots L_{j}^{l_{j}}L_{j+1}^{l_{j+1}}\cdot (1\otimes v)\mid
m,n,k_i,\ldots,k_{i-m},l_{j+1},l_j,\ldots,l_{j-n}\in\Z_+\}.
\end{equation*}
  \end{lemm}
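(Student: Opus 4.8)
The plan is to prove the two assertions in sequence: first that $\mathcal B$ spans $\Omega(\lambda,\alpha,\beta)\otimes V$, and then that $\mathcal B$ is linearly independent; cyclicity of $1\otimes v$ is immediate once we know $\mathcal B$ spans and that every element of $\mathcal B$ lies in $\mathcal{U}(\H)(1\otimes v)$. For the spanning part, I would argue that $\mathcal{U}(\H)(1\otimes v)$ contains $1\otimes w$ for every basis vector $w$ of $V$ of the displayed PBW form, and then use the action of $L_m$ (for $m$ large, where $L_m v\in\C v$ and $L_m$ acts on $\Omega(\lambda,\alpha,\beta)$ by $f(t)\mapsto \lambda^m(t-m\alpha)f(t-m)$) together with induction on the degree of the polynomial in $t$ to sweep out all of $\Omega(\lambda,\alpha,\beta)\otimes w$, exactly as in the proof of Theorem \ref{th1} (Claim 2). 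Since $V$ is the span of such $w$'s, this shows $\mathcal{U}(\H)(1\otimes v)=\Omega(\lambda,\alpha,\beta)\otimes V$; and because every vector listed in $\mathcal B$ visibly lies in $\mathcal{U}(\H)(1\otimes v)$, the set $\mathcal B$ spans.

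For linear independence, the key is a dimension/filtration count matching $\mathcal B$ against the known PBW basis of $\Omega(\lambda,\alpha,\beta)\otimes V$, namely $\{t^k\otimes (\text{PBW monomial})\cdot v\}$. I would introduce a filtration on $\mathcal{U}(\H)$ (or directly on the monomials) by total degree, and compute the leading term of $I_{i-m}^{k_{i-m}}\cdots I_i^{k_i}L_{j-n}^{l_{j-n}}\cdots L_j^{l_j}L_{j+1}^{l_{j+1}}\cdot(1\otimes v)$ with respect to this filtration. The point is that each $I_p$ with $p>i$ and each $L_q$ with $q>j$ acts on $v$ as a scalar but acts on $\Omega(\lambda,\alpha,\beta)$ raising or preserving the $t$-degree in a controlled way; in particular the single extra factor $L_{j+1}$ contributes the "missing" polynomial degree. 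Concretely, applying $L_{j+1}^{l_{j+1}}$ to $1\otimes v$ produces $\lambda^{l_{j+1}(j+1)}\prod(t-\ldots)\otimes v$ plus lower-degree terms, i.e. a polynomial of degree exactly $l_{j+1}$ in $t$ tensored with $v$ (using that $L_{j+1}v$ is a scalar multiple of $v$ when $j+1>j$), so $l_{j+1}$ plays precisely the role of the exponent $k$ in the basis element $t^k\otimes(\cdots)v$. The remaining operators $I_{i-m}^{k_{i-m}}\cdots L_j^{l_j}$, modulo lower filtration degree and modulo multiples of $1\otimes(\text{PBW monomials in }v)$, reproduce the PBW monomial $I_{i-m}^{k_{i-m}}\cdots L_j^{l_j}\cdot v$ acting in $V$. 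This gives a triangular (unitriangular up to nonzero scalars, using $\lambda\ne0$) change of basis between $\mathcal B$ and the standard PBW basis of $\Omega(\lambda,\alpha,\beta)\otimes V$, whence $\mathcal B$ is linearly independent.

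The main obstacle I anticipate is bookkeeping in the leading-term computation: I must choose the filtration degree on a monomial $I_{*}^{*}\cdots L_{*}^{*}$ so that (a) reordering commutators only lowers degree, (b) the action on $\Omega(\lambda,\alpha,\beta)$ interacts predictably with it, and (c) the correspondence $\mathcal B\leftrightarrow\{t^k\otimes\text{PBW}\cdot v\}$ comes out genuinely triangular rather than merely "generically" so. A natural choice is to weight $L_q$ and $I_p$ by $1$ each and separately track the $t$-degree; then the claim is that $L_{j+1}$ is the unique generator among those appearing whose exponent shifts $t$-degree independently of the $V$-part, while all $L_q$ with $q>j+1$ and all $I_p$ with $p>i$, being scalars on $v$, only combine into lower-order corrections. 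One must also handle the boundary bracket relations $[L_q,L_{j+1}]$, $[L_q,I_{i-m}]$, etc., checking that moving a "large-index" operator past a "small-index" one in a monomial of $\mathcal B$ produces only terms already accounted for; this is where I would invoke, essentially verbatim, the combinatorial argument of \cite[Lemma 6]{TZ}, adapted to carry the extra Heisenberg generators $I_p$ along for the ride (their brackets with $L$'s land in $I$'s of controlled index, and $[I_p,I_q]$ is central hence zero on $\Omega(\lambda,\alpha,\beta)\otimes V$). Once the triangularity is in place the conclusion is immediate.
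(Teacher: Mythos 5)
Your second paragraph is essentially the paper's own proof: the paper expresses each element of $\mathcal B$ as $\lambda^{(j+1)l_{j+1}}\,t^{l_{j+1}}\otimes I_{i-m}^{k_{i-m}}\cdots I_i^{k_i}L_{j-n}^{l_{j-n}}\cdots L_j^{l_j}\cdot v$ plus lower terms with respect to a lexicographic order on the obvious basis $\mathcal B'=\{t^{l_{j+1}}\otimes(\mathrm{PBW\ monomial})\cdot v\}$ that ranks the $V$-part exponents $(l_j,\ldots,l_{j-n},k_i,\ldots,k_{i-m})$ first and the $t$-degree $l_{j+1}$ last, which is exactly the resolution of the obstacle you flag (operators hitting the polynomial factor may raise the $t$-degree, but they strictly lower the $V$-part, so the transition matrix is genuinely triangular with nonzero diagonal entries $\lambda^{(j+1)l_{j+1}}$, and no reordering of the monomials is even needed since only indices $\le i$ and $\le j+1$ occur). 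Your first paragraph is redundant and its last step ("every element of $\mathcal B$ lies in $\mathcal U(\H)(1\otimes v)$, hence $\mathcal B$ spans") is a non sequitur, but this is harmless: once the triangularity shows $\mathcal B$ is a basis, both spanning and cyclicity follow at once, which is how the paper concludes.
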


\begin{proof}
 Observe from \eqref{Om1} that $\Omega(\lambda,\alpha,\beta)\otimes V$ has a basis
 \begin{eqnarray*}
 \mathcal{B^\prime}=\Big\{t^{l_{j+1}}\otimes I_{i-m}^{k_{i-m}}\cdots I_{i}^{k_{i}}L_{j-n}^{l_{j-n}}\cdots L_{j}^{l_{j}}\cdot  v\,\Big|\,
m,n,k_i,\ldots,
 k_{i-m},l_{j+1},l_j,\ldots,l_{j-n}\in\Z_+\Big\}.
 \end{eqnarray*}
 Now we  define the following partial order ``$\prec$" on $\mathcal{B^\prime}$
   $$t^{l_{j+1}}\otimes I_{i-m_1}^{k_{i-m_1}}\cdots I_{i}^{k_{i}}L_{j-n_1}^{l_{j-n_1}}\cdots L_{j}^{l_{j}}\cdot v\prec t^{q_{j+1}}\otimes I_{i-m_2}^{p_{i-m_2}}\cdots I_{i}^{p_{i}}L_{j-n_2}^{q_{j-n_2}}\cdots L_{j}^{q_{j}}\cdot v$$
  if and only if
  $$
  (l_{j},\ldots,l_{j-n_1},k_{i},\ldots,k_{i-m_1},\underbrace{0,\ldots,0}_{m_2+n_2},l_{j+1})
  < (q_{j},\ldots,q_{j-n_2},p_{i},\ldots,p_{i-m_2},\underbrace{0,\ldots,0}_{m_1+n_1},q_{j+1})
$$
 in the lexicographical order, which  is defined
\begin{eqnarray*}
(a_1,...,a_\ell)<(b_1,...,b_\ell)
 \Longleftrightarrow \exists   k>0 \ \mathrm{such\ that}\ a_i=b_i\ \mathrm{for\ all}\ i<k\ \mathrm{and}\ a_k<b_k.
\end{eqnarray*}
 Each element of $\mathcal B$ can be written  as a linear combinations of elements in   $\mathcal{B^\prime}$:
\begin{eqnarray*}
&&I_{i-m}^{k_{i-m}}\cdots I_{i}^{k_{i}}L_{j-n}^{l_{j-n}}\cdots L_{j}^{l_{j}}L_{j+1}^{l_{j+1}}\cdot(1\otimes v)
\nonumber\\
& =&\!\!\lambda^{(j+1)l_{j+1}} t^{l_{j+1}}\otimes I_{i-m}^{k_{i-m}}\cdots I_{i}^{k_{i}}L_{j-n}^{l_{j-n}}\cdots L_{j}^{l_{j}}\cdot v+\mathrm{lower\ terms}\ (w.r.t\  \prec).
\end{eqnarray*}
This shows that the transition matrix from $\mathcal B^\prime$ to $\mathcal B$ is upper triangular with diagonal entries nonzero. Thus,  $\mathcal B$ is a basis of $\Omega(\lambda,\alpha,\beta)\otimes V$ and  the lemma follows.
\end{proof}

Now we are ready to give some conditions under which  $\mathrm{Ind}_{\underline y,\lambda}(\C_{RS})$ is irreducible.
\begin{theo}\label{th3}
Let  $\lambda\in\C^*,\,\underline y=(y_1,y_2,y_3)\in\C^3,\,RS=(r_1,r_2,s_0,s_1)\in\C^4$ with $y_3=0$. Assume  $\mathrm{Ind}_{\underline y,\lambda}(\C_{RS})$
is defined as in \eqref{ind5.4}. Then
\begin{itemize}\lineskip0pt\parskip-1pt
\item[{\rm (i)}] $\mathrm{Ind}_{\underline y,\lambda}(\C_{RS})\cong \Omega(\lambda,\alpha,\beta)\otimes V,$
where $V$ is the classical Verma module described in Example {\rm\ref{ex3.3}}\,{\rm (i)} and $\alpha,\beta, h, d_i$ for $i=0,1,2,3$ are defined as
\begin{equation}\label{ab5.5}
\aligned
&\alpha=\lambda^{-2}(\lambda r_1-r_2),\ \beta=\lambda^{-1}s_1,\ h=\lambda^{-2}(r_2-2\lambda r_1),
\\&
d_0=s_0-\lambda^{-1}s_1,\ d_3=y_3=0,\  d_i=y_i\quad \mathrm{for}\ i=1,2.
\endaligned
\end{equation}
\item[{\rm (ii)}] $\mathrm{Ind}_{\underline y,\lambda}(\C_{RS})$ is irreducible if
$s_0-\lambda^{-1}s_1+(n-1)y_2\neq0\mbox{ for all}\ n\in \Z\setminus \{0\}, \ \mbox{and either }r_2\neq\lambda r_1\mbox{ or } s_1\neq0$.
\end{itemize}
\end{theo}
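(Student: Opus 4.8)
The plan is to prove (i) by exhibiting an explicit linear isomorphism and checking it intertwines the $\H$-actions, and then to deduce (ii) directly from (i) together with Example \ref{ex3.3}(i) and Theorem \ref{th1}. For part (i), recall that $V=\mathrm{Ind}(\bar M)$ is generated as an $\H$-module by a vector $v$ (the image of $1$) on which $L_0$ acts by $h$, the $C_i$ act by $d_i$, and $L_m\cdot v=I_m\cdot v=0$ for all $m\in\Z_+$; in particular $\Omega(\lambda,\alpha,\beta)\otimes V$ is cyclic with generator $1\otimes v$ by Lemma \ref{lemm5.1} (taking $i=j=0$ there, noting $I_0\cdot v=d_0 v$, $L_0\cdot v=hv$, and $I_m\cdot v=L_m\cdot v=0$ for $m\ge1$). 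On the other side, $\mathrm{Ind}_{\underline y,\lambda}(\C_{RS})$ is cyclic over $\H$ with generator $\bar 1$, the image of the cyclic vector of $\C_{RS}$, subject to the relations \eqref{LI5.2} and $C_i\cdot\bar1=y_i\bar1$. The natural candidate for the isomorphism is the $\H$-module map $\Phi$ determined by $\Phi(\bar 1)=1\otimes v$; by the universal property of the induced module $\mathrm{Ind}_{\underline y,\lambda}(\C_{RS})$, such a $\Phi$ exists and is surjective as soon as one verifies that $1\otimes v$ satisfies in $\Omega(\lambda,\alpha,\beta)\otimes V$ exactly the defining relations of $\C_{RS}$ with the parameters given in \eqref{ab5.5}.

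So the first substantive step is this relation check. Using the $\H$-action on $\Omega(\lambda,\alpha,\beta)=\C[t]$ from \eqref{Om1} and the tensor-product action, compute
\[
L_m\cdot(1\otimes v)=\lambda^m(t-m\alpha)\otimes v+1\otimes(L_m\cdot v),\qquad
I_m\cdot(1\otimes v)=\lambda^m\beta\otimes v+1\otimes(I_m\cdot v).
\]
For $m\ge1$ the $V$-terms vanish. Since $L_0\cdot v=hv$ one gets $L_0\cdot(1\otimes v)=t\otimes v$, hence $(L_m-\lambda^mL_0)\cdot(1\otimes v)=\lambda^m(t-m\alpha)\otimes v-\lambda^m t\otimes v=-m\alpha\lambda^m\otimes v$; matching with $r_m$ for $m=1,2$ gives the two equations $-\alpha\lambda=r_1$, $-2\alpha\lambda^2=r_2$, i.e. $\alpha=-r_1/\lambda=-r_2/(2\lambda^2)$ — which is consistent precisely when $r_2=2\lambda r_1$... wait, this needs care: the actual definition in \eqref{LI5.2} is in terms of $L_0$ acting on $\C_{RS}$, and on $1\otimes v$ the operator $L_0$ acts by $t\otimes v$, not by a scalar; so $(L_m-\lambda^mL_0)$ applied to $1\otimes v$ must be re-expressed relative to the value it should take. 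The clean way is: set $\alpha=\lambda^{-2}(\lambda r_1-r_2)$, $h=\lambda^{-2}(r_2-2\lambda r_1)$ as in \eqref{ab5.5} and verify $L_m\cdot(1\otimes v)=\lambda^m L_0\cdot(1\otimes v)+\big(\lambda^{m-2}(m-1)r_2-\lambda^{m-1}(m-2)r_1\big)(1\otimes v)$ for all $m\ge1$, which reduces to the polynomial identity $-m\alpha=\lambda^{-2}(m-1)r_2-\lambda^{-1}(m-2)r_1$ in $m$ after cancelling $\lambda^m\otimes v$; substituting $\alpha$ confirms it, with the $m=1,2$ cases recovering $r_1,r_2$ because $h$ was chosen so that $L_0\cdot v=hv$ absorbs the discrepancy. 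The analogous $I_m$ check: $I_m\cdot(1\otimes v)=\lambda^m\beta\otimes v$ for $m\ge1$ and $I_0\cdot(1\otimes v)=\beta\otimes v+1\otimes(d_0v)=(\beta+d_0)\otimes v$, so $\beta=\lambda^{-1}s_1$ and $\beta+d_0=s_0$ give exactly \eqref{ab5.5}. The central elements act by $y_i$ on both sides by construction (with $d_3=y_3=0$ forced since $\Omega$ and $V$ both kill $C_3$). Once these relations are matched, $\Phi$ is a well-defined surjective $\H$-homomorphism; it is injective because it sends the basis $\mathcal B$ of Lemma \ref{lemm5.1} (with $i=j=0$) to the PBW-type spanning set of $\mathrm{Ind}_{\underline y,\lambda}(\C_{RS})$, or more simply because both modules have the same graded dimension as free modules over $\mathcal U(\H_-)$ and $\Phi$ is a surjection respecting that filtration. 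Either way $\Phi$ is an isomorphism, proving (i).

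For part (ii), assume $s_0-\lambda^{-1}s_1+(n-1)y_2\neq0$ for all $n\in\Z\setminus\{0\}$ and that $r_2\neq\lambda r_1$ or $s_1\neq0$. Translating through \eqref{ab5.5}: the first condition is exactly $d_0+(n-1)d_2\neq0$ for all $n\in\Z\setminus\{0\}$, which by Example \ref{ex3.3}(i) makes $V$ an irreducible $\H$-module satisfying the hypotheses of Theorem \ref{th2.1}. The second condition says $\alpha\neq0$ or $\beta\neq0$: indeed $r_2\neq\lambda r_1\iff\lambda r_1-r_2\neq0\iff\alpha\neq0$, and $s_1\neq0\iff\beta\neq0$. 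Hence $(\lambda,\alpha)\in(\C^*)^2$ or $(\lambda,\beta)\in(\C^*)^2$ (recall $\lambda\in\C^*$), and Theorem \ref{th1} applies to give that $\Omega(\lambda,\alpha,\beta)\otimes V$ is irreducible. By the isomorphism of part (i), $\mathrm{Ind}_{\underline y,\lambda}(\C_{RS})$ is irreducible. The main obstacle is purely bookkeeping in the relation check of step two — keeping straight that $L_0$ does not act as a scalar on $1\otimes v$, so the "shift" parameter $h$ and the affine term $t$ must be reconciled carefully — but no genuine difficulty arises beyond matching polynomial coefficients in $m$.
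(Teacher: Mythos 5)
Your route is the same as the paper's: verify that $1\otimes v$ satisfies the defining relations \eqref{LI5.2} with the parameters \eqref{ab5.5}, use the universal property of the induced module to get a surjective $\H$-map $\Phi$ with $\Phi(\bar 1)=1\otimes v$, prove injectivity by matching the PBW basis of $\mathrm{Ind}_{\underline y,\lambda}(\C_{RS})$ with the basis of $\Omega(\lambda,\alpha,\beta)\otimes V$ furnished by Lemma \ref{lemm5.1}, and then deduce (ii) from Example \ref{ex3.3}(i) and Theorem \ref{th1}; part (ii) of your write-up is correct as it stands.

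Two points in your verification of (i) need repair, though both are bookkeeping rather than a wrong approach. First, $L_0\cdot(1\otimes v)=t\otimes v+h(1\otimes v)$, not $t\otimes v$; hence $(L_m-\lambda^m L_0)\cdot(1\otimes v)=-\lambda^m(m\alpha+h)(1\otimes v)$ for $m\ge1$ (this is exactly \eqref{LI5.5}), and the identity to be checked is $-(m\alpha+h)=\lambda^{-2}(m-1)r_2-\lambda^{-1}(m-2)r_1$, which indeed holds with $\alpha,h$ as in \eqref{ab5.5}. The identity you actually display, $-m\alpha=\lambda^{-2}(m-1)r_2-\lambda^{-1}(m-2)r_1$, is false in general (it forces $r_2=2\lambda r_1$); your remark that $h$ ``absorbs the discrepancy'' is the right idea, but $h$ never enters your reduced formula, so as written the relation check fails. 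Note also that $\lambda^{m-2}(m-1)r_2-\lambda^{m-1}(m-2)r_1$ specializes to $r_1$ at $m=1$ and to $r_2$ at $m=2$, so a single uniform check for all $m\ge1$ covers both lines of \eqref{LI5.2}. Second, Lemma \ref{lemm5.1} must be applied with $i=j=-1$, not $i=j=0$: with $i=j=0$ the hypothesis of the lemma fails, because $I_0\cdot v=d_0v$ and $L_0\cdot v=hv$ act by scalars, so the listed monomials (which would include $I_0$- and $L_0$-powers applied first) are not a basis of $V$, and the lemma's conclusion would produce spurious $L_1$-powers. With $i=j=-1$ the extra generator is $L_{j+1}=L_0$ and one obtains precisely the basis $\mathcal B_1$ used in the paper, which $\Phi$ matches bijectively with the PBW basis $\mathcal B_2$ of $\mathrm{Ind}_{\underline y,\lambda}(\C_{RS})$, giving injectivity. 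With these two corrections your argument coincides with the paper's proof.
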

\begin{proof}
{\rm (i)}
Let $\alpha,\beta,h,d_i\in\C$ for $i=0,1,2,3$ as in \eqref{ab5.5}. Then
\begin{eqnarray*}
&&r_1=-\lambda(\alpha+h),\
r_2=-\lambda^2(2\alpha+h),\
 s_1=\lambda \beta,\\&&
s_0=d_0+\beta,\ y_3=d_3=0,\ y_i=d_i\quad \mathrm{for}\  i=1,2.
\end{eqnarray*}
Denote  $v=1+J_1\in V$.
By Lemma \ref{lemm5.1} and the structure of  $V$, $\Omega(\lambda,\alpha,\beta)\otimes V$ is a cyclic module with a
generator $1\otimes v$ and has a basis
$$\mathcal{B}_1=\Big\{I_{-n}^{l_{-n}}\cdots I_{-1}^{l_{-1}}L_{-m}^{k_{-m}}\cdots
L_{-1}^{k_{-1}}L_{0}^{k_{0}}\cdot (1\otimes v)\,\Big|\,m,n, k_{-m},\ldots,k_{-1},k_0,l_{-n},\ldots,l_{-1}\in\Z_+
\Big\}.$$
 By Theorem \ref{th1} and the fact that $d_3=0$,
$\Omega(\lambda,\alpha,\beta)\otimes V$ is irreducible
if $d_0+(n-1)d_2\neq0$ for all $n\in \Z\setminus \{0\}$ and either $\alpha\in\C^*$ or $\beta\in\C^*$.

In $\Omega(\lambda,\alpha,\beta)\otimes V$,  we can compute that
 \begin{equation}\label{LI5.5}
\aligned
(L_m-\lambda^{m}L_0)\cdot(1\otimes v)&=-\lambda^m(m\alpha+h)(1\otimes v)
=
r_m(1\otimes v)\quad\mathrm{for}\ m=1,2;
\\(L_m-\lambda^{m}L_0)\cdot(1\otimes v)&=-\lambda^m(m\alpha+h)(1\otimes v)
\\&=
\big(\lambda^{m-2}(m-1)r_2-\lambda^{m-1}(m-2)r_1\big)(1\otimes v)
\quad\mathrm{for}\ m>2;
\\I_m\cdot (1\otimes v)&=(\lambda^m\beta+\delta_{m,0}d_0)(1\otimes v)
=s_m(1\otimes v)\quad\mathrm{for}\ m=0,1;
\\
I_m\cdot (1\otimes v)&=\lambda^{m}\beta(1\otimes v)
=\lambda^{m-1}s_1(1\otimes v)\quad\mathrm{for}\ m>1
\endaligned
\end{equation}
 and $C_i\cdot (1\otimes v)=d_i(1\otimes v)=y_i(1\otimes v)$ for $i=1,2,3$.
Comparing \eqref{LI5.2}  with \eqref{LI5.5}, we deduce that there exists an $\H$-module homomorphism  (epimorphism)
$$\tau:\mathrm{Ind}_{\underline y,\lambda}(\C_{RS})\rightarrow \Omega(\lambda,\alpha,\beta)\otimes V,$$
which is uniquely determined by $\tau(\bar 1)=1\otimes v$ with
$$\bar1:=1\otimes 1+\mbox{$\sum\limits_{i=1}^{3}$}(C_i-y_i)\mathcal{U}(\H)\otimes_{\mathcal{U}(\H_\lambda^{(0)})}\C_{RS}\in\mathrm{Ind}_{\underline{y},\lambda}(\C_{RS}).$$
Clearly, $\mathrm{Ind}_{\underline y,\lambda}(\C_{RS})$ has a basis
 $$\mathcal{B}_2=
 \Big\{I_{-n}^{l_{-n}}\cdots I_{-1}^{l_{-1}}L_{-m}^{k_{-m}}\cdots L_{-1}^{k_{-1}} L_{0}^{k_{0}}\cdot  \bar 1\,\Big|\, m,n, k_{-m},\ldots,k_{-1},k_0,l_{-n},\ldots,l_{-1}\in
 \Z_+
 \Big\}.$$
Since  $\tau|_{\mathcal{B}_2}:\mathcal{B}_2\rightarrow \mathcal{B}_1$ is a bijection,  $\tau:\mathrm{Ind}_{\underline y,\lambda}(\C_{RS})\rightarrow \Omega(\lambda,\alpha,\beta)\otimes V$  is an isomorphism.
Hence, {\rm (i)} holds.

{\rm (ii)} By  {\rm (i)} and Theorem \ref{th1},    $\mathrm{Ind}_{\underline y,\lambda}(\C_{RS})$ is irreducible if and only if $\Omega(\lambda,\alpha,\beta)\otimes V$
is irreducible. But Examples \ref{ex3.3}(i), $\Omega(\lambda,\alpha,\beta)\otimes V$
is irreducible if
$d_0+(n-1)d_2\neq0 \mathrm{\ for\ all}\ n\in \Z\setminus \{0\}$ and either $\alpha\neq0\ \mathrm{or}\ \beta\neq0$ by noting  $d_3=0$.
Thus by \eqref{ab5.5}  $\mathrm{Ind}_{\underline y,\lambda}(\C_{RS})$ is irreducible if
$s_0-\lambda^{-1}s_1+(n-1)y_2\neq0\ \mathrm{for\ all}\ n\in \Z\setminus \{0\}$ and either $r_2\neq\lambda r_1\ \mathrm{or}\ s_1\neq0.$
Theorem \ref{th3} is proved.
\end{proof}

While the irreducible conditions of  $\mathrm{Ind}_{\underline z,\lambda}(\C_{PQ})$ can be given as follows.
\begin{theo}\label{th4}
Let  $\lambda\in\C^*,\underline z=(z_0,z_1,z_2,z_3)\in\C^4,PQ=(p_2,p_3,p_4,q_1,q_2)\in\C^5$ with $z_3=0$. Assume   $\mathrm{Ind}_{\underline z,\lambda}(\C_{PQ})$
is defined as in \eqref{ind5.1}. Then
\begin{itemize}\lineskip0pt\parskip-1pt\item[{\rm (i)}] $\mathrm{Ind}_{\underline z,\lambda}(\C_{PQ})\cong \Omega(\lambda,\alpha,\beta)\otimes V$,  where $V$ is the classical
Whittaker module described in Example $\ref{ex3.3}$\,{\rm (ii)} and  $\alpha,\beta, \lambda_1, \lambda_2,\mu_1, e_i$ for $i=0,1,2,3$ are defined as
\begin{equation}\label{ab5.3}
\aligned
&\alpha=\lambda^{-4}(\lambda p_3-p_4),\ \beta=\lambda^{-2}q_2,\  \lambda_1=\lambda^{-3}(2p_4-3\lambda p_3),\\&
 \lambda_2=\lambda^{-2}(p_4-2\lambda p_3+\lambda^2p_2),\
 \mu_1=q_1-\lambda^{-1}q_2,\\&
  e_3=z_3=0,\   e_0=z_0-\lambda^{-2}q_2,\
  e_i=z_i \quad \mathrm{for}\ i=1,2;
\endaligned
\end{equation}
\item[{\rm (ii)}]  $\mathrm{Ind}_{\underline z,\lambda}(\C_{PQ})$ is irreducible if
$z_0+(n-1)z_2\neq0\ \mbox{for all}\ n\in \Z\setminus \{0\},\ \lambda q_1\neq q_2\ \mbox{and either }$ $p_4\neq\lambda p_3\ \mbox{or}\ q_2\neq0,$
\end{itemize}\end{theo}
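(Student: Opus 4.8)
The plan is to run the argument of Theorem~\ref{th3} with the classical Verma module replaced by the classical Whittaker module $V=\mathrm{Ind}(\tilde M)$ of Example~\ref{ex3.3}\,(ii); I shall only indicate the changes.

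For part~(i), the first step is to invert \eqref{ab5.3}, expressing $p_2,p_3,p_4,q_1,q_2,z_0,z_1,z_2,z_3$ in terms of $\alpha,\beta,\lambda_1,\lambda_2,\mu_1,e_0,e_1,e_2,e_3$; explicitly $q_2=\lambda^2\beta$, $q_1=\mu_1+\lambda\beta$, $p_3=-\lambda^2\lambda_1-2\lambda^3\alpha$, $p_4=-\lambda^3\lambda_1-3\lambda^4\alpha$, $p_2=\lambda_2-\lambda\lambda_1-\lambda^2\alpha$, $z_0=\beta+e_0$ and $z_i=e_i$ for $i=1,2,3$. Writing $v$ for the canonical generator of $V$ (so $L_1v=\lambda_1v$, $L_2v=\lambda_2v$, $L_mv=0$ for $m\ge3$, $I_1v=\mu_1v$, $I_mv=0$ for $m\ge2$, $I_0v=e_0v$, $C_iv=e_iv$, while $L_0v\notin\C v$), Lemma~\ref{lemm5.1} shows that $\Omega(\lambda,\alpha,\beta)\otimes V$ is a cyclic $\H$-module generated by $1\otimes v$ with a PBW-type basis
$$\mathcal B_1=\Big\{I_{-n}^{l_{-n}}\cdots I_{-1}^{l_{-1}}L_{-m}^{k_{-m}}\cdots L_{-1}^{k_{-1}}L_0^{k_0}L_1^{k_1}\cdot(1\otimes v)\ \Big|\ m,n,l_{-n},\dots,l_{-1},k_{-m},\dots,k_{-1},k_0,k_1\in\Z_+\Big\}.$$
Using the $\H$-actions on $\Omega(\lambda,\alpha,\beta)$ and on $V$, one then computes $(L_m-\lambda^{m-1}L_1)\cdot(1\otimes v)$, $I_m\cdot(1\otimes v)$ and $C_i\cdot(1\otimes v)$ and checks that, after the substitution above, they reproduce exactly the defining relations \eqref{LII5.6} of $\C_{PQ}$ together with $C_i\cdot(1\otimes v)=z_i(1\otimes v)$. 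This produces an $\H$-module epimorphism $\tau\colon\mathrm{Ind}_{\underline z,\lambda}(\C_{PQ})\to\Omega(\lambda,\alpha,\beta)\otimes V$ with $\tau(\bar1)=1\otimes v$, where $\bar1$ is the canonical generator of $\mathrm{Ind}_{\underline z,\lambda}(\C_{PQ})$.

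That $\tau$ is an isomorphism follows from a basis count: $\mathrm{Ind}_{\underline z,\lambda}(\C_{PQ})$ has the basis $\mathcal B_2$ obtained from $\mathcal B_1$ by replacing $1\otimes v$ with $\bar1$ --- because, modulo $\H_\lambda^{(1)}$ and the central relations $C_i=z_i$, the free generators over $\C_{PQ}$ are precisely $L_0,L_1$ and the $L_{-m},I_{-n}$ with $m,n\ge1$ --- and $\tau$ restricts to a bijection $\mathcal B_2\to\mathcal B_1$; hence $\tau$ is an isomorphism, proving~(i).

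For part~(ii), (i) together with Theorem~\ref{th1} shows that $\mathrm{Ind}_{\underline z,\lambda}(\C_{PQ})$ is irreducible if and only if $\Omega(\lambda,\alpha,\beta)\otimes V$ is, and by Example~\ref{ex3.3}\,(ii) the latter is irreducible once $e_0+(n-1)e_2\neq0$ for all $n\in\Z\setminus\{0\}$, $\mu_1\neq0$ and either $\alpha\neq0$ or $\beta\neq0$. Translating these through \eqref{ab5.3} yields the asserted sufficient conditions (in particular $\mu_1\neq0\Leftrightarrow\lambda q_1\neq q_2$, $\alpha\neq0\Leftrightarrow p_4\neq\lambda p_3$ and $\beta\neq0\Leftrightarrow q_2\neq0$). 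I expect the only genuine work to lie in part~(i): one must check that $L_m\cdot(1\otimes v)=\lambda^m(t-m\alpha)\otimes v$ for $m\ge3$, so that $(L_m-\lambda^{m-1}L_1)\cdot(1\otimes v)=\bigl(\lambda^m(1-m)\alpha-\lambda^{m-1}\lambda_1\bigr)(1\otimes v)$, and that this equals $\bigl(\lambda^{m-4}(m-3)p_4-\lambda^{m-3}(m-4)p_3\bigr)(1\otimes v)$ for $m>4$ after inserting the formulas for $p_3,p_4$; the remaining relations and the basis argument then copy the Verma-module case of Theorem~\ref{th3} essentially verbatim.
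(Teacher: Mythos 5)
Your proposal is correct and follows essentially the same route as the paper's proof: invert \eqref{ab5.3}, use Lemma \ref{lemm5.1} (with $i=-1$, $j=0$) to get the cyclic generator $1\otimes v$ and the basis $\mathcal{B}_1$ with the extra $L_1^{k_1}$ factor, match the action on $1\otimes v$ against \eqref{LII5.6} and the central values $z_i$ to obtain the epimorphism $\tau$, conclude it is an isomorphism via the bijection $\mathcal{B}_2\to\mathcal{B}_1$, and deduce (ii) from Theorem \ref{th1} together with Example \ref{ex3.3}(ii), with the same key verification $(L_m-\lambda^{m-1}L_1)\cdot(1\otimes v)=\bigl(\lambda^m(1-m)\alpha-\lambda^{m-1}\lambda_1\bigr)(1\otimes v)$ for $m>4$. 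One caveat you share with the paper: translating $e_0+(n-1)e_2\neq0$ through \eqref{ab5.3} literally gives $z_0-\lambda^{-2}q_2+(n-1)z_2\neq0$ rather than the stated $z_0+(n-1)z_2\neq0$, so this discrepancy lies in the theorem's formulation (the paper's own proof makes the identical jump), not in your argument.
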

\begin{proof}
{\rm (i)}
Let $\alpha,\beta,\lambda_1,\lambda_2,\mu_1\in\C$ be  as in \eqref{ab5.3}. Then we have
\begin{eqnarray*}
&&p_2=\lambda_2-\lambda\lambda_1-\lambda^2\alpha,
\
p_3=-\lambda^2(\lambda_1+2\lambda \alpha),
\
p_4=-\lambda^3(\lambda_1+3\lambda \alpha),
\\&&
 q_1=\mu_1+\lambda \beta,\ q_2=\lambda^2\beta,\ z_3=e_3=0,\ z_0=e_0+\beta,\  z_i=e_i \quad \mathrm{for}\ i=1,2.
\end{eqnarray*}
Denote   $v=1+J_2\in V.$  Clearly, $\H_+\cdot v\in\C v.$
Since $V$ has a basis
$$\Big\{I_{-n}^{l_{-n}}\cdots I_{-1}^{l_{-1}}L_{-m}^{k_{-m}}\cdots L_{-1}^{k_{-1}}L_{0}^{k_{0}}\cdot v\,\Big|\, m,n,k_{-m},\ldots,k_0,l_{-n},\ldots,l_{-1}
\in\Z_+
\Big\},$$
using Lemma \ref{lemm5.1}, we see that
 $\Omega(\lambda,\alpha,\beta)\otimes V$ is cyclic with a generator $1\otimes v$ and has a basis
 \begin{eqnarray*}
 \mathcal{B}_1=\Big\{I_{-n}^{l_{-n}}\cdots I_{-1}^{l_{-1}}L_{-m}^{k_{-m}}\cdots L_{0}^{k_{0}}L_{1}^{k_{1}}\cdot (1\otimes v)\,\Big|\,m,n,
 k_{-m},\ldots,
 k_0, k_{1},
 l_{-n},\ldots,
 l_{-1}\in\Z_+\}.
  \end{eqnarray*}
 By Theorem \ref{th1} and the fact that $e_3=0$,
$\Omega(\lambda,\alpha,\beta)\otimes V$ is irreducible if $e_0+(n-1)e_2\neq0$ for all $n\in \Z\setminus \{0\}$, $\mu_1\neq0$ and either $\alpha\in\C^*$ or $\beta\in\C^*$.

In $W$,  we  can compute that
 \begin{equation}\label{LII5.9}
\aligned
(L_m-\lambda^{m-1}L_1)\cdot(1\otimes v)&=\big(\lambda^m(1-m)\alpha-\lambda^{m-1}\lambda_1+\delta_{2-m,0}\lambda_2\big) (1\otimes v)
\\&
=p_m(1\otimes v)\quad
\mathrm{for}\ m=2,3,4;\\
(L_m-\lambda^{m-1}L_1)\cdot(1\otimes v)&=\big(\lambda^m(1-m)\alpha-\lambda^{m-1}\lambda_1\big) (1\otimes v)
\\&=\big(\lambda^{m-4}(m-3)p_4-\lambda^{m-3}(m-4)p_3\big) (1\otimes v)
\quad
\mathrm{for}\ m>4;
\\
I_m\cdot (1\otimes v)&=(\lambda^m\beta+\delta_{1-m,0}\mu_1)  (1\otimes v)
=q_m (1\otimes v)
\quad \mathrm{for}\ m=1,2;
\\I_m\cdot (1\otimes v)&=\lambda^m\beta  (1\otimes v)
=\lambda^{m-2}q_2 (1\otimes v)\quad\mathrm{for}\ m>2
\endaligned
 \end{equation}
 and $C_i\cdot (1\otimes v)=(e_i+\delta_{i,0}\beta)(1\otimes v)=z_i(1\otimes v)$ for $i=0,1,2,3$.
Comparing \eqref{LII5.6}  with \eqref{LII5.9},  there exists an $\H$-module homomorphism   (epimorphism)
$$\tau:\mathrm{Ind}_{\underline{z},\lambda}(\C_{PQ})\rightarrow \Omega(\lambda,\alpha,\beta)\otimes V,$$
which uniquely determined by $\tau(\bar 1)=1\otimes v$ with
$$\bar1:=1\otimes 1+\mbox{$\sum\limits_{i=0}^{3}$}(C_i-z_i)\mathcal{U}(\H)\otimes_{\mathcal{U}(\H_\lambda^{(1)})}\C_{PQ}\in\mathrm{Ind}_{\underline{z},\lambda}(\C_{PQ}).$$
It is clear that $\mathrm{Ind}_{\underline{z},\lambda}(\C_{PQ})$ has a basis
 $$\mathcal{B}_2=\Big\{I_{-n}^{l_{-n}}\cdots I_{-1}^{l_{-1}}L_{-m}^{k_{-m}}\cdots L_{0}^{k_{0}}L_{1}^{k_{1}}\cdot \bar 1\,\Big|\,m,n, k_{-m},\ldots,k_0,k_{1},l_{-n},\ldots,l_{-1}\in\Z_+\}.$$
Since  $\tau\mid_{\mathcal{B}_2}:\mathcal{B}_2\rightarrow \mathcal{B}_1$ is a bijection,  $\tau:\mathrm{Ind}_{\underline{z},\lambda}(\C_{PQ})\rightarrow \Omega(\lambda,\alpha,\beta)\otimes V$  is an isomorphism.
This
completes the proof of part {\rm (i)}.

{\rm (ii)} By {\rm (i)} and Theorem \ref{th1},  $\mathrm{Ind}_{\underline{z},\lambda}(\C_{PQ})$ is irreducible if and only if $\Omega(\lambda,\alpha,\beta)\otimes V$ is irreducible.
   But by Examples \ref{ex3.3}(ii), $\Omega(\lambda,\alpha,\beta)\otimes V$
is irreducible
 if
$e_0+(n-1)e_2\neq0\ \mathrm{for\ all}\ n\in \Z\setminus \{0\},\ \mu_1\neq0$ and either $\alpha\neq0\ \mathrm{or}\ \beta\neq0$ by noting  $e_3=0$.
Thus by $\eqref{ab5.3}$
$\mathrm{Ind}_{\underline{z},\lambda}(\C_{PQ})$ is irreducible
 if $z_0+(n-1)z_2\neq0\ \mathrm{for\ all}\ n\in \Z\setminus \{0\},\ \lambda q_1\neq q_2$ and either $p_4\neq\lambda p_3\ \mathrm{or}\ q_2\neq0.$
\end{proof}

\bigskip
\noindent{\bf \Large Acknowledgments}

This work was supported by NSF grant Nos. 11431010, 11371278,  11501417, 11671247,  Innovation Program of Shanghai Municipal Education Commission.

\small 
\end{document}